\documentclass[a4paper,11pt]{article}



%
\usepackage{amsmath,amssymb,epsfig,
color
}

\makeatletter
\@addtoreset{equation}{section}

\makeatother


\newcommand{\bnull}{{\boldsymbol 0}}
\newcommand{\bff}{{\bf f}}
\newcommand{\bw}{{\bf w}}

\newcommand{\bq}{{\bf q}}

\newcommand{\be}{{\bf e}}
\newcommand{\bu}{{\bf u}}
\newcommand{\btu}{\widetilde{\bf u}}
\newcommand{\tU}{\widetilde{U}}
\newcommand{\btv}{\widetilde{\bf v}}

\newcommand{\bp}{{\bf p}}

\newcommand{\bx}{{\bf x}}

\newcommand{\by}{{\bf y}}
\newcommand{\bv}{{\bf v}}

\newcommand{\bpsi}{{\boldsymbol \psi}}

\newcommand{\spann}{\operatorname{span}}
\newcommand{\diag}{\operatorname{diag}}
\newcommand{\blockdiag}{\operatorname{blockdiag}}

\newcommand{\Chol}{\operatorname{Chol}}

\newcommand{\n}{\nonumber}

\newcommand{\cL}{{\mathcal L}}

\newcommand{\cN}{{\mathcal N}}

\newcommand{\cR}{{\mathcal R}}


\makeatletter
\@addtoreset{equation}{section}

\makeatother

\newtheorem{theorem}{Theorem}[section]
\newtheorem{definition}[theorem]{Definition}
\newtheorem{lemma}[theorem]{Lemma}

\newtheorem{proposition}[theorem]{Proposition}
\newtheorem{corollary}[theorem]{Corollary}

\newenvironment{proof}{\par \vspace{0.3cm} \noindent{\sc Proof:} \ignorespaces}%
{\nolinebreak\hfill $\square$\par \medskip}


\def\R{ \mathbb {R}}
\def\C{ \mathbb {C}}

\def\cN{\mathcal{N}}

\textwidth14cm
\oddsidemargin1.5cm


\begin{document}

\sloppy

\title{Converse theorem on a contraction metric \\ for a periodic orbit}

\author{Peter Giesl\thanks{Department of Mathematics,
         University of Sussex,
         Falmer BN1 9QH,
         United Kingdom
(p.a.giesl@sussex.ac.uk).}}

		\maketitle
		\begin{abstract}
			Contraction analysis uses a local criterion to prove the long-term behaviour of a dynamical system.
			A contraction metric is a Riemannian metric with respect to which the distance between adjacent solutions contracts. If adjacent solutions in all directions perpendicular to the flow are contracted, then there exists a unique periodic orbit, which is exponentially stable and we obtain a bound on the rate of exponential attraction.
			
			In this paper we study the converse question and show that, given an exponentially stable periodic orbit, a contraction metric exists on its basin of attraction and we can recover the bound on the rate of exponential attraction.
		\end{abstract}
		
	\noindent Keywords: Periodic orbit; Basin of attraction; Contraction metric; Converse theorem; Floquet theory.
			
		\noindent	MSC2010: 34C25;  34D20; 37C27


	 \section{Introduction}
	 
The stability and basin of attraction of periodic orbits is an important problem in many applications. Already the determination of a periodic orbit is a non-trivial task as it involves solving the differential equation. The classical definition of stability, as well as its study using a Lyapunov function require the knowledge of the position of the periodic orbit which in many applications can only be approximated. An alternative way to study the stability and basin of attraction is contraction analysis, which is a local criterion and does not require us to know the location of the periodic orbit.

%
%
%

Throughout the paper we will study the autonomous ODE
\begin{eqnarray}
\dot{\bx}&=&\bff(\bx)\label{ODE}
\end{eqnarray}
where $\bff\in C^\sigma(\mathbb R^n,\mathbb R^n)$ with $\sigma\ge 1$. We denote the solution $\bx(t)$ with initial condition $\bx(0)=\bx_0$ by $S_t\bx_0=\bx(t)$ and  assume that it exist for all $t\ge 0$.

In the next definition we will define a contraction metric on $\mathbb R^n$.
Note that $M(\bx)$ defines a point-dependent scalar product through $\langle \bv,\bw\rangle_{M(\bx)}=\bv^TM(\bx)\bw$ for all $\bv,\bw\in\mathbb R^n$.

\begin{definition}[Contraction metric]\label{Riemannian}
	A Riemannian metric is a function $M\in C^0(G,\mathbb S^n)$, where $G\subset \mathbb R^n$ is open and $\mathbb S^n$ denotes the symmetric $n\times n$ matrices, $M(\bx)$ is positive definite for all $\bx\in G$ and  the orbital derivative of $M$ exists for all $\bx\in G$ and is continuous, i.e.
$$M'(\bx)=\frac{d}{dt} M(S_t\bx)\big|_{t=0}$$
exists and is continuous. A sufficient condition for the latter is that $M\in C^1(G,\mathbb S^n)$; then $M_{ij}'(\bx)=\nabla M_{ij}(\bx)\cdot \bff(\bx)$ for all $i,j\in\{1,\ldots,n\}$.

Define \begin{eqnarray}
L_M(\bx;\bv)&:=&\frac{1}{2}\bv^T\left(M(\bx)D\bff(\bx)+D\bff(\bx)^TM(\bx)+M'(\bx)\right)\bv. \label{LM}
\end{eqnarray}

The Riemannian metric $M$ is called {\bf  contraction metric in $K\subset  G$ with exponent $-\nu<0 $} if $L_M(\bx)\le -\nu$ for all $\bx\in K$, where
\begin{eqnarray}
L_M(\bx)&:=&\max_{\bv^TM(\bx)\bv=1,\bv^T M(\bx)\bff(\bx)=0}L_M(\bx;\bv).\label{L_M}
\end{eqnarray}
\end{definition}

The following theorem shows the implications of the existence of such a contraction metric on a certain set in the phase space.

\begin{theorem}
	Let $\varnothing\not= K \subset \mathbb R^n$ be
	a compact, connected and positively invariant set which contains no equilibrium.
	Let $M$ be a   contraction metric in $K$ with exponent $-\nu<0 $, see Definition \ref{Riemannian}.
 
Then there exists one and only one periodic orbit $\Omega\subset K$. This periodic orbit
is exponentially asymptotically stable, and the real parts of all Floquet exponents -- 
except the trivial one -- are less than or equal to $-\nu$.
 Moreover, the basin of attraction $A(\Omega)$ contains $K$.
 \end{theorem}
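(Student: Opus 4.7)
The plan is to exploit infinitesimal contraction of the variational flow transverse to $\bff$, use this to build a strictly contracting Poincar\'e return map on a transverse section, and then read the Floquet data directly off the one-period contraction estimate.

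\textbf{Step 1 (variational contraction transverse to $\bff$).} For $\bx_0\in K$ let $\bx(t)=S_t\bx_0$ and let $\bv(t)$ solve the first variation $\dot{\bv}=D\bff(\bx(t))\bv$. The definition of $M'$ in Definition \ref{Riemannian} gives directly
$$\frac{d}{dt}\bigl[\bv(t)^T M(\bx(t))\bv(t)\bigr]=2 L_M(\bx(t);\bv(t)).$$
Decomposing $\bv(t)=\alpha(t)\bff(\bx(t))+\bv_\perp(t)$ with $\bv_\perp(t)^T M(\bx(t))\bff(\bx(t))=0$, and combining the assumption $L_M\le -\nu$ on $K$ with compactness bounds on $\bff$, yields the key estimate
$$\|\bv_\perp(t)\|_{M(\bx(t))}\le e^{-\nu t}\|\bv_\perp(0)\|_{M(\bx_0)}\qquad(t\ge 0).$$

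\textbf{Step 2 (Poincar\'e map and fixed point).} Since $K$ is compact, positively invariant and contains no equilibrium, $\inf_{K}\|\bff\|>0$. Fix $\bx_\star\in K$ and let $\Sigma$ be the hyperplane through $\bx_\star$ which is $M(\bx_\star)$-orthogonal to $\bff(\bx_\star)$. Transversality and invariance allow one to define, on a suitable neighbourhood in $\Sigma\cap K$, the first-return Poincar\'e map $P$, whose return time $\tau$ is uniformly bounded above and below. The derivative of $P$ is conjugate to the transverse part of the variational flow over time $\tau$, so by Step 1 its $M$-operator norm is at most $e^{-\nu\tau}<1$. Banach's fixed-point theorem supplies a unique fixed point $\bp\in\Sigma\cap K$, generating the unique periodic orbit $\Omega\subset K$.

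\textbf{Step 3 (Floquet exponents and basin).} Linearising at a point of $\Omega$, the monodromy $\Phi(T)$ over the minimal period $T$ has $\bff$ as eigenvector with eigenvalue $1$ (the trivial Floquet multiplier) while the induced map on the $M$-orthogonal complement has norm $\le e^{-\nu T}$, so the non-trivial Floquet multipliers $\rho_i$ satisfy $|\rho_i|\le e^{-\nu T}$ and hence $\mathrm{Re}\,\mu_i\le -\nu$. For any $\bx_0\in K$, iteration of the contraction of $P$ shows that consecutive crossings of $S_t\bx_0$ through $\Sigma$ converge geometrically to $\bp$, and a routine Gronwall comparison between crossings upgrades this to $\mathrm{dist}(S_t\bx_0,\Omega)\to 0$ exponentially. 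Thus $K\subset A(\Omega)$ and $\Omega$ is exponentially asymptotically stable.

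\textbf{Main obstacle.} The technical crux is Step 2: upgrading the purely \emph{infinitesimal} contraction of Step 1 into a genuinely contracting nonlinear Poincar\'e map on the cross-section inside $K$. One has to pick the local section in a way compatible with the point-dependent metric $M$, control the nonlinear error arising from the projection onto $\Sigma$, and verify that return times stay bounded and that the orbit segment between consecutive crossings remains inside $K$ so that the pointwise bound $L_M\le -\nu$ is actually available throughout. Once Step 2 is secured, the rest follows mechanically from exponential contraction of a $C^1$ map with a fixed point.
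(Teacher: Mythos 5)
The paper does not prove this theorem itself; it attributes it to Borg (for $M=I$) and to Stenstr\"om (general $M$), so there is no in-paper proof to compare against. Judged independently, your Step~1 isolates the correct mechanism, but it glosses over a point that matters: $\bv_\perp(t)$ is \emph{not} a solution of the variational equation, so $\frac{d}{dt}\bv_\perp^TM\bv_\perp = 2L_M(\bx;\bv_\perp)$ is not immediate. You have to use that $\bff(\bx(t))$ itself solves $\dot\bv=D\bff(\bx(t))\bv$, write $\bv=\alpha\bff+\bv_\perp$, compute $\dot\bv_\perp=D\bff\,\bv_\perp-\dot\alpha\,\bff$, and observe that the extra term drops out of the derivative of $\bv_\perp^TM\bv_\perp$ precisely because of the $M$-orthogonality $\bv_\perp^TM\bff=0$. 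With that made explicit, the Gronwall estimate is correct.

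The genuine gaps are in Steps~2 and~3. In Step~2 you fix an \emph{arbitrary} $\bx_\star\in K$ and assert that a first-return Poincar\'e map exists on a neighbourhood of $\bx_\star$ in $\Sigma$; but a generic $\bx_\star$ can be a wandering point whose orbit never returns near $\Sigma$, so there is no return map to contract. You must first produce a recurrent point, e.g.\ by passing to a minimal subset of the compact invariant set $K$, before the Poincar\'e map is even defined, and then arrange that a small closed ball is mapped into itself so that Banach's theorem applies. Worse, Step~3's claim $K\subset A(\Omega)$ cannot be obtained by iterating the \emph{local} Poincar\'e map: orbits starting at points of $K$ far from $\Omega$ have no reason to meet the local section $\Sigma$ at all, and uniqueness of the periodic orbit in all of $K$ suffers from the same locality problem. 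The classical way around both issues, and the one consistent with the machinery of Section~\ref{conv} of this paper (Steps~II--IV: projection $\pi$, synchronization $\theta_\bx$, distance $d$), is to define a time-synchronized distance between \emph{any} two solutions in $K$, with synchronization given by projection in the direction $M$-orthogonal to $\bff$, and to show this distance decays like $e^{-\nu t}$. Existence, uniqueness, the Floquet bound, and $K\subset A(\Omega)$ all follow from that single global estimate; the Poincar\'e-map route cannot be made global without it.
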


This theorem goes back to Borg \cite{borg} with $M(\bx)=I$, and has been extended to a general Riemannian metric \cite{stenstroem}. For more results on contraction analysis for a periodic orbit see \cite{hartman,hartmanbook,leonov3,leonov96}.

 Note that a similar result holds with an equilibrium if the contraction takes place in all directions $\bv$, i.e. if $L_M(\bx)\le -\nu$ in \eqref{L_M} is replaced by
 $\cL_M(\bx):=\max_{\bv^TM(\bx)\bv=1}L_M(\bx;\bv)\le -\nu$. For more references on contraction analysis see \cite{lohmiller}, and for the relation to Finsler-Lyapunov functions see \cite{finsler}.

 Note that $L_M(\bx)$ is a continuous with respect to $\bx$ and, as we will show in the paper, also locally Lipschitz-continuous. Due to the maximum, however, it is not differentiable in general.

In this paper we are interested in converse results, i.e. given an exponentially stable periodic orbit, does a Riemannian contraction metric as in Definition \ref{Riemannian} exist? 
\cite{lohmiller} gives a converse theorem, but here $M(t,\bx)$ depends on $t$ and will, in general, become unbounded as $t\to\infty$.  In \cite{giesl04} the existence of such a contraction metric was shown on a given compact subset of $A(\Omega)$, first on the periodic orbit, using Floquet theory, and then on $K$, using a Lyapunov function. The local construction, however, neglected the fact that the Floquet representation of solutions of the first variation equation along the periodic orbit is in general not real, but complex. We will show in this paper, that, by choosing the complex Floquet representation appropriately, the constructed Riemannian metric is real-valued, thus justifying the arguments in \cite{giesl04}. Moreover, we will show the existence of a Riemannian metric on the whole, possibly unbounded basin of attraction by using a new construction. The Riemannian metric will be arbitrarily close to the true rate of exponential attraction. Let us summarize the main result of the paper in the following theorem.

\begin{theorem}\label{main}
	Let $\Omega$ be an exponentially stable periodic orbit of $\dot{\bx}=\bff(\bx)$, let $-\nu$ be the largest real part of all its non-trivial Floquet exponents and $\bff\in C^\sigma(\mathbb R^n,\mathbb R^n)$ with $\sigma\ge 3$.
	
Then for all $\epsilon\in (0,\nu/2)$ there exists a contraction metric $M\in C^{\sigma-1}( A(\Omega), \mathbb S^{n})$ in $A(\Omega)$ as in Definition \ref{Riemannian}  with exponent $-\nu+\epsilon<0 $, i.e.
\begin{eqnarray}L_M(\bx)&=&\frac{1}{2}\max_{\bv^TM(\bx)\bv=1,\bv^T M(\bx)\bff(\bx)=0}\bv^T\left(M(\bx)D\bff(\bx)+D\bff(\bx)^TM(\bx)+M'(\bx)\right)\bv
	\nonumber\\
	&\le& -\nu+\epsilon\label{esti}
\end{eqnarray}
holds for all $\bx\in A(\Omega)$.
\end{theorem}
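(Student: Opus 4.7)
The plan is to establish the converse theorem in three stages: first construct $M$ on $\Omega$ itself using a carefully chosen Floquet representation, then extend it smoothly to a tubular neighborhood of $\Omega$, and finally propagate the definition to the whole basin $A(\Omega)$ by a pull-back along the flow.

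\textbf{Stage 1 (on $\Omega$).} Parametrize $\Omega$ as $\bx(t)$ with period $T$, set $A(t):=D\bff(\bx(t))$, and let $\Phi(t)$ be the principal fundamental matrix of $\dot{\by}=A(t)\by$. Floquet's theorem gives $\Phi(t)=P(t)e^{tB}$ where $P(t+T)=P(t)$; $B$ has one eigenvalue $0$ (with eigenvector $\bff(\bx(0))$) and the remaining $n-1$ eigenvalues have real part $\le -\nu$. Both $P$ and $B$ may be complex, and this is the gap in \cite{giesl04}. To fix it I would put $B$ into complex Jordan form and pair conjugate Floquet eigenspaces so that the corresponding complex-conjugate blocks recombine into real, $T$-periodic, invariant subbundles of $\R^n$; then on the transverse $(n{-}1)$-bundle solve the Hermitian Lyapunov equation $B^*Q+QB=-2(\nu-\epsilon/2)Q$ and verify that $Q$ descends to a real symmetric positive-definite form in the recombined real basis. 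Augmenting with a positive block in the flow direction and setting $M(\bx(t)):=P(t)^{-*}\widetilde Q\,P(t)^{-1}$ gives a real-valued, $C^{\sigma-1}$, $T$-periodic contraction metric on $\Omega$ with $L_M\le -\nu+\epsilon/2$ there.

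\textbf{Stage 2 (tubular neighborhood).} Extend $M$ smoothly to an open neighborhood $\cN$ of $\Omega$ using tubular-neighborhood coordinates built from $\Omega$ and its normal bundle. Since $\bff$ does not vanish on $\Omega$, the constraint $\bv^T M(\bx)\bff(\bx)=0$ in \eqref{L_M} defines a smoothly varying hyperplane near $\Omega$; since $L_M$ is locally Lipschitz and $L_M\le-\nu+\epsilon/2$ on $\Omega$, by continuity $L_M\le -\nu+\epsilon$ on a (possibly smaller) $\cN$.

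\textbf{Stage 3 (propagation to $A(\Omega)$).} Choose a smooth cutoff $\tau:A(\Omega)\to[0,\infty)$ with $\tau\equiv 0$ on $\cN$ and $S_{\tau(\bx)}\bx\in\cN$ for every $\bx\in A(\Omega)$ (this uses that every trajectory in $A(\Omega)$ eventually enters $\cN$). Then define
\begin{eqnarray*}
M(\bx)&:=&e^{-2(\nu-\epsilon)\tau(\bx)}\,DS_{\tau(\bx)}(\bx)^T\,M(S_{\tau(\bx)}\bx)\,DS_{\tau(\bx)}(\bx)\\
&&\quad+\int_0^{\tau(\bx)}e^{-2(\nu-\epsilon)s}\,DS_s(\bx)^T R(S_s\bx)\,DS_s(\bx)\,\rmd s,
\end{eqnarray*}
for a suitable positive-definite weight $R$ whose role is to absorb the non-monotonicity of $\tau$ along trajectories. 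A direct computation of the orbital derivative, using $\frac{d}{dt}DS_t(\bx)=D\bff(S_t\bx)DS_t(\bx)$, telescopes so that the contribution from the $\cN$-term produces the rate $-\nu+\epsilon$ (by Stage 2) and the integrand handles the remainder; choosing $R$ large enough everywhere off $\cN$ guarantees $L_M(\bx)\le-\nu+\epsilon$ on all of $A(\Omega)$.

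The main obstacle is Stage 3: arranging a globally positive-definite, $C^{\sigma-1}$ metric whose orbital derivative attains the sharp rate $-\nu+\epsilon$ on the whole, possibly unbounded, basin, and doing so across the seam where $\tau$ transitions. The exponential weight $e^{-2(\nu-\epsilon)s}$ is forced by the requirement that $\epsilon\in(0,\nu/2)$ be arbitrary; a subtler issue is that $\tau$ cannot be taken literally as the first hitting time of $\cN$ (which is only lower-semicontinuous) so a smoothed proxy is required, and one must check that the loss incurred by this smoothing is dominated by the buffer $\epsilon/2$ built into Stage 1. Stage 1's real-valuedness issue, the gap in \cite{giesl04}, is the secondary obstacle; it is resolved by the conjugate-pair recombination above.
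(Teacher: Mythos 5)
Your Stages~1 and~2 are essentially the paper's Steps~I and the beginning of~V: build a real Floquet normal form so that the metric $M_0(S_t\bq)=P^{-1}(S_t\bq)^*(S^{-1})^*S^{-1}P^{-1}(S_t\bq)$ is real-valued, bound the quadratic form $\tfrac12\bw^*(A^*+A)\bw$ on the transverse subspace, and extend off the orbit with a cutoff. Two small remarks here. First, your ``Hermitian Lyapunov equation $B^*Q+QB=-2(\nu-\epsilon/2)Q$'' has no positive-definite solution unless every transverse eigenvalue of $B$ has real part exactly $-(\nu-\epsilon/2)$; you mean the \emph{inequality} $B^*Q+QB\preceq -2(\nu-\epsilon/2)Q$, which is achieved by the paper with a near-Jordan transform $S$ having $\epsilon'|\lambda_j|$ on the superdiagonals (Proposition~\ref{prop}). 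Second, ``by continuity $L_M\le-\nu+\epsilon$ on a smaller $\cN$'' is correct but only because $\Omega$ is compact and $L_M$ is continuous (the paper additionally proves local Lipschitz continuity of $L_M$ in the appendix, needed for a later integral estimate).

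Stage~3 is a genuinely different route, and this is where the proposal does not close. The paper exploits Lemma~\ref{lem}, which linearizes the problem: if $M=e^{2V}M_1$ then $L_M=L_{M_1}+V'$, so one only needs a \emph{scalar} function $V$ whose orbital derivative $V'$ equals $-L_{M_1}+r$ for a chosen $r\le-\mu$. This is a scalar transport equation, solved first near the orbit by an explicit integral whose convergence rests on (i) the projection $\pi$ and time-synchronization $\theta_\bx$ of Lemma~\ref{imp}/\eqref{first}, and (ii) the exponential decay $d(S_t\bx)\le e^{2(-\nu+2\epsilon)t}d(\bx)$ of Lemma~\ref{def_d}, together with the Lipschitz estimate on $L_{M_1}$. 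Your pull-back construction $M(\bx)=e^{-2(\nu-\epsilon)\tau(\bx)}DS_{\tau(\bx)}(\bx)^TM(S_{\tau(\bx)}\bx)DS_{\tau(\bx)}(\bx)+\int_0^{\tau(\bx)}\cdots$ tries to solve a matrix-valued problem directly. The telescoping you invoke for the orbital derivative requires $\tau(S_t\bx)=\tau(\bx)-t$ exactly, but you then smooth $\tau$ so that this fails; the boundary term at $s=\tau(\bx)$ then picks up a factor $\tau'(\bx)\ne -1$ and does \emph{not} cancel. You propose to ``absorb'' this by choosing $R$ ``large enough'', but this is unsubstantiated: $L_M$ is a \emph{max} over the hyperplane $\bv^TM(\bx)\bff(\bx)=0$, which itself depends on $M$, so adding a large positive-definite $R$-term to $M$ shifts both the quadratic form in \eqref{LM} and the constraint set in a coupled, nonlinear way, and nothing in the sketch controls the resulting $L_M$. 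Moreover, there is no analogue of the paper's synchronization mechanism (Steps~II--IV), which is precisely what gives the sharp rate $-\nu+\epsilon$ near the orbit and feeds into the convergence of the defining integral; without something playing that role, the claimed bound in the seam region is not established. In short, Stages~1--2 mirror the paper; Stage~3 is a different idea whose key steps (smoothed hitting time, role of $R$, verification of the $L_M$ bound across the seam) are missing, so the proof is incomplete there.
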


The metric is constructed in several steps: first on the periodic orbit, then in a neighborhood, and finally in the whole basin of attraction. In the proof, we define a projection of points $\bx$ in a neighborhood of the periodic orbit onto the periodic orbit, namely onto $\bp\in \Omega$, such that  to $(\bx-\bp )^TM(\bp)\bff(\bp)=0$. This is then used to synchronize the times of solutions through $\bx$ and $\bp$, and to define a time-dependent distance between these solutions, which decreases exponentially.

Let us compare our result with  converse theorems for a contraction metric for an equilibrium. In \cite{converse}, three converse theorems were obtained: Theorem 4.1 constructs a metric on a given compact subset of the basin of attraction (see \cite{giesl04} for the case of a periodic orbit), Theorem 4.2 constructs a metric on the whole basin of attraction (see this paper for the case of a periodic orbit), while Theorem 4.4 constructs a metric as solution of a linear matrix-valued PDE (see \cite{other} for the case of a periodic orbit). The latter construction is beneficial for its computation by solving the PDE, and it also constructs a smooth function; however, the exponential rate of attraction cannot be recovered, which is an advantage of the approach in this paper.





Let us give an overview over the paper: In Section \ref{Floquet} we prove a special Floquet normal form to ensure that the contraction metric that we later construct on the periodic orbit is real-valued. In Section \ref{conv} we prove the main result of the paper, Theorem \ref{main}, showing the existence of a Riemannian metric on the whole basin of attraction. The section also contains Corollary \ref{help_other}, defining a projection onto the periodic orbit and related estimates. In the appendix we prove that $L_M$ is locally Lipschitz-continuous.

\section{Floquet normal form}
\label{Floquet}

%
%
%

Before we consider the Floquet normal form, we will prove a lemma which calculates  $L_M(\bx)$ for the Riemannian metric $M(\bx)=e^{2V(\bx)}N(\bx)$.

\begin{lemma}\label{lem}
	Let $N\colon \mathbb R^n\to \mathbb S^n$ be a Riemannian metric and $V\colon \mathbb R^n\to \mathbb R$ a continuous and orbitally continuously differentiable function.
	
	Then $M(\bx)=e^{2V(\bx)}N(\bx)$ is a Riemannian metric and
	\begin{eqnarray*}
		L_M(\bx)&=&L_N(\bx)+V'(\bx).
	\end{eqnarray*}
\end{lemma}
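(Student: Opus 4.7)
The plan is to verify the Riemannian metric properties for $M$ directly from the definition and then compute $L_M(\bx;\bv)$ by brute force, exploiting the compatibility of the scalar factor $e^{2V(\bx)}$ with the two constraints in the definition of $L_M(\bx)$.

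First I would check that $M(\bx)=e^{2V(\bx)}N(\bx)$ fits Definition \ref{Riemannian}: continuity is immediate, positive definiteness follows from $e^{2V(\bx)}>0$ together with $N(\bx)\succ 0$, and orbital continuous differentiability follows from the product rule applied componentwise, which yields
\begin{equation*}
M'(\bx)=e^{2V(\bx)}\bigl(2V'(\bx)N(\bx)+N'(\bx)\bigr).
\end{equation*}

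Next I would insert this into the definition \eqref{LM}. Since $M(\bx)D\bff(\bx)+D\bff(\bx)^{T}M(\bx)=e^{2V(\bx)}\bigl(N(\bx)D\bff(\bx)+D\bff(\bx)^{T}N(\bx)\bigr)$, the factor $e^{2V(\bx)}$ comes out of the entire quadratic form, giving
\begin{equation*}
L_{M}(\bx;\bv)=e^{2V(\bx)}\bigl(L_{N}(\bx;\bv)+V'(\bx)\,\bv^{T}N(\bx)\bv\bigr).
\end{equation*}

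The key step is to handle the two side-constraints in \eqref{L_M}. I would introduce the rescaled vector $\bw:=e^{V(\bx)}\bv$. Then $\bv^{T}M(\bx)\bv=1$ becomes exactly $\bw^{T}N(\bx)\bw=1$, and $\bv^{T}M(\bx)\bff(\bx)=0$ becomes exactly $\bw^{T}N(\bx)\bff(\bx)=0$, so the map $\bv\mapsto\bw$ is a bijection between the two admissible sets. Under this bijection $L_{N}(\bx;\bw)=e^{2V(\bx)}L_{N}(\bx;\bv)$ and $\bw^{T}N(\bx)\bw=1$ forces $e^{2V(\bx)}\bv^{T}N(\bx)\bv=1$, so the expression for $L_{M}(\bx;\bv)$ collapses to $L_{N}(\bx;\bw)+V'(\bx)$. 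Taking the maximum over the admissible $\bw$ (equivalently over the admissible $\bv$) immediately yields $L_{M}(\bx)=L_{N}(\bx)+V'(\bx)$.

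There is no real obstacle here; the only thing one must be careful about is not to confuse the two normalization constraints (one with $M$, one with $N$) during the change of variables, which is precisely what the factor $e^{V(\bx)}$ in the definition of $\bw$ resolves.
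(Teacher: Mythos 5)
Your proof is correct and follows essentially the same route as the paper: compute $M'$, factor the exponential out of the quadratic form, change variables to $\bw=e^{V(\bx)}\bv$, observe that the two normalization constraints transform exactly, and take the maximum.
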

\begin{proof}
	It is clear that $M(\bx)$ is a positive definite for all $\bx$ since $e^{2V(\bx)}>0$.
	We have
	\begin{eqnarray*}
		L_M(\bx;\bv)&=&
		\frac{1}{2}\bv^T\left(M(\bx)D\bff(\bx)+D\bff(\bx)^TM(\bx)+M'(\bx)\right)\bv\\
		&=&
		\frac{1}{2}\bv^T\bigg(e^{2V(\bx)}N(\bx)D\bff(\bx)+e^{2V(\bx)}D\bff(\bx)^TN(\bx)\\
		&&\hspace{1.2cm}+
		e^{2V(\bx)}(2V'(\bx)N(\bx)+N'(\bx))\bigg)\bv\\
		&=&
		\frac{1}{2}\bw^T\left(N(\bx)D\bff(\bx)+D\bff(\bx)^TN(\bx)+
		N'(\bx)\right)\bw
		+\bw^TN(\bx)\bw \, V'(\bx)
	\end{eqnarray*}
	with $\bw=e^{V(\bx)}\bv$, so $L_M(\bx;\bv)=L_N(\bx; \bw)+\bw^TN(\bx)\bw \, V'(\bx)$.
	Thus,
	\begin{eqnarray*}
		L_M(\bx)&=&\max_{\bv^TM(\bx)\bv=1,\bv^TM(\bx)\bff(\bx)=0}L_M(\bx;\bv)\\
		&=&\max_{\bw^TN(\bx)\bw=1,\bw^T N(\bx)\bff(\bx)=0}\left[L_N(\bx;\bw)
		+\bw^TN(\bx)\bw V'(\bx)\right]\\
		&=&L_N(\bx)+ V'(\bx).
	\end{eqnarray*}
	This shows the lemma.
\end{proof} 

In order to show later that our constructed Riemannian metric $M$ is real-valued, we will construct a special Floquet normal form in Proposition \ref{prop} such that the matrix in \eqref{real} is real-valued. In Corollary \ref{coro} we will show estimates in the case that \eqref{var} is the first variation equation of a periodic orbit.
The proof of the following proposition is inspired by \cite{chicone}.

\begin{proposition}\label{prop}
	Consider the periodic differential equation
	\begin{eqnarray}\dot{\by}&=&F(t)\by\label{var}
	\end{eqnarray} where $F\in C^s(\mathbb R,\mathbb R^{n\times n})$ is $T$-periodic, $s\ge 1$ and denote by $\Phi\in C^s(\mathbb R,\mathbb R^{n\times n})$ its principal fundamental matrix solution with $\Phi(0)=I$. 
	
	Then there exists a $T$-periodic function $P\in C^s(\mathbb R,\mathbb C^{n\times n})$ with $P(0)=P(T)=I$ and a  matrix $B\in \mathbb C^{n\times n}$ such that for all $t\in\mathbb R$
	$$\Phi(t)=P(t)e^{Bt}.$$
	
	Denote by $\lambda_1,\ldots,\lambda_r\in \mathbb R\setminus \{0\}$ the pairwise distinct real eigenvalues and by $\lambda_{r+1},\overline{\lambda_{r+1}},\ldots,$
	$\lambda_{r+c},\overline{\lambda_{r+c}}\in\mathbb C\setminus \mathbb R$ the pairwise distinct pairs of complex conjugate complex eigenvalues of $\Phi(T)$ with algebraic multiplicity $m_j$ of $\lambda_j$.
	For $\epsilon>0$ there exists a non-singular matrix $S\in \mathbb R^{n\times n}$ such that $B=SAS^{-1}$ with $A=\blockdiag(K_1,K_2,\ldots,K_{r+c})$ and $K_j\in \mathbb C^{m_j\times m_j}$ for $j=1,\ldots,r$ and   $K_j\in \mathbb R^{2m_j\times 2m_j}$ for $j=r+1,\ldots,r+c$ as well as
	$$\frac{1}{2}\bw^*(A^*+A)\bw\le \sum_{j=1}^{r+c}c_j\sum_{i=1}^{m_j}|w_{i+\sum_{k=1}^{j-1}m_k}|^2
	\text{	for all }\bw\in \mathbb C^n,$$
	where $c_j=\left(\frac{\ln |\lambda_j|}{T}+\epsilon\right)$ if $m_j\ge 2$ and  $c_j=\frac{\ln |\lambda_j|}{T}$ if $m_j=1$.

 Moreover, we have
\begin{eqnarray}
(P^{-1}(t))^*(S^{-1})^*S^{-1}P^{-1}(t)&\in& \mathbb R^{n\times n}\label{real}
\end{eqnarray}
	for all $t\in \mathbb R$.
\end{proposition}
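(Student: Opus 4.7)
My plan is to build $A$, $S$ and $P$ by picking a complex logarithm of $\Phi(T)$ in a basis that respects the real Jordan structure of $\Phi(T)$. First I invoke the classical Floquet theorem: since $\Phi(T)$ is a real invertible matrix, there exists a complex $B$ with $e^{BT}=\Phi(T)$, and then $P(t):=\Phi(t)e^{-Bt}$ is automatically $T$-periodic (from $\Phi(t+T)=\Phi(t)\Phi(T)$), satisfies $P(0)=P(T)=I$, and inherits the $C^s$-regularity of $\Phi$.

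Next I fix a real invertible matrix $U$ that brings $\Phi(T)$ to real Jordan form: block-diagonal, with $m_j\times m_j$ blocks $\lambda_j I+N_j$ (nilpotent $N_j$) for each real eigenvalue $\lambda_j$, and $2m_j\times 2m_j$ real blocks built out of $2\times 2$ rotation matrices (with entries $\Re\lambda_j,\pm\Im\lambda_j$) and $I_2$'s for each complex conjugate pair. On each block I choose a logarithm very deliberately. For $\lambda_j>0$ the standard power series yields a real matrix $K_j^{(0)}=\frac{\ln\lambda_j}{T}I+\frac{1}{T}\sum_{k\geq 1}\frac{(-1)^{k+1}}{k}(\lambda_j^{-1}N_j)^k$. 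For $\lambda_j<0$ the same series gives $K_j^{(0)}=\frac{\ln|\lambda_j|}{T}I+\frac{i\pi}{T}I+(\text{real matrix})$; crucially the imaginary part is the scalar $\frac{i\pi}{T}I$ and commutes with the rest of the block. For complex pairs, I decompose the real Jordan block as a commuting product of a rotational block-diagonal piece and an $I{+}(\text{nilpotent})$ piece and take the logarithm of each factor separately; both logs are real, so the combined $K_j^{(0)}$ is real. I then conjugate every block of dimension at least $2$ by a real diagonal scaling $\diag(1,\delta,\delta^2,\ldots)$ (with an $I_2$-blocked analogue in the complex-pair case), with $\delta>0$ chosen small enough that the super-diagonal entries of the resulting $K_j$ have Hermitian-part operator norm at most $\epsilon$. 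Assembling $S=U\cdot D_{\delta}$ (real) and $A=\blockdiag(K_1,\ldots,K_{r+c})$ gives $B=SAS^{-1}$ with the advertised structure. The bound on $\frac{1}{2}\bw^*(A^*+A)\bw$ then follows block by block: the diagonal scalar part $\frac{\ln|\lambda_j|}{T}I$ survives, the $\frac{i\pi}{T}I$ (negative-real case) and the skew-symmetric $\pm\arg\lambda_j$ contribution (complex-pair case) both cancel in $K_j^*+K_j$, and the scaled super-diagonal contributes at most $\epsilon$ in operator norm when $m_j\geq 2$, nothing when $m_j=1$.

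For the reality claim \eqref{real}, using $P^{-1}(t)=S e^{At}S^{-1}\Phi^{-1}(t)$ together with $S$ and $\Phi(t)$ real gives
\begin{equation*}
(P^{-1}(t))^*(S^{-1})^*S^{-1}P^{-1}(t)=(\Phi^{-1}(t))^T(S^{-1})^T\bigl[(e^{At})^*e^{At}\bigr]S^{-1}\Phi^{-1}(t),
\end{equation*}
so the claim reduces to showing $(e^{At})^*e^{At}$ is real. This matrix is block diagonal, and on each block one of two scenarios occurs. Either $K_j$ is purely real (positive real eigenvalue or complex-pair case), in which case $e^{K_j t}$ is real and $(e^{K_j t})^*e^{K_j t}$ is trivially real; or $K_j=\frac{i\pi}{T}I+Q_j$ with $Q_j$ real (negative real eigenvalue), in which case $e^{K_j t}=e^{i\pi t/T}e^{Q_j t}$ is a unimodular scalar phase times a real matrix, and the phase cancels in $(e^{K_j t})^*e^{K_j t}=(e^{Q_j t})^T e^{Q_j t}$. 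The hard part is exactly this negative-real-eigenvalue case: a generic complex logarithm would smear imaginary contributions throughout the Jordan block and destroy \eqref{real}. The whole point of the careful block-by-block choice in the previous paragraph is to confine the obstruction to the commuting scalar factor $\frac{i\pi}{T}I$, where the cancellation in the sesquilinear form is automatic.
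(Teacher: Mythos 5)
Your proposal is correct and follows essentially the same route as the paper's proof: Floquet theory to produce $B$ and $P$, a real Jordan basis $U$ for $\Phi(T)$, a block-by-block choice of logarithm that keeps all Jordan blocks real except for an $\frac{i\pi}{T}I$ scalar term on negative-real-eigenvalue blocks (so that it cancels in $(e^{K_jt})^*e^{K_jt}$), and a real diagonal (resp. $I_2$-blocked) scaling to shrink the super-diagonal contribution to the Hermitian part below $\epsilon$. The paper applies the scaling to the Jordan form before taking the logarithm and bounds the Hermitian quadratic form by elementwise Cauchy--Schwarz, whereas you take the logarithm first, conjugate by the scaling afterwards, and bound via the operator norm of the Hermitian part; these are equivalent and yield the same $B$, $A$, and $S$.
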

\begin{proof}
	Since $F\in C^s$, we also have $\Phi\in C^s(\mathbb R,\mathbb R^{n\times n})$. 
	Noting that $\Psi(t):=\Phi(t+T)$ solves \eqref{var} with $\Psi(0)=\Phi(T)$, we obtain from the uniqueness of solutions that 
	\begin{eqnarray}
		\Phi(t+T)&=&\Psi(t)=\Phi(t)\Phi(T)\text{ for all }t\in \mathbb R.\label{uni}
	\end{eqnarray}
	
	Consider $C:=\Phi(T)\in \mathbb R^{n\times n}$ which is non-singular and hence all eigenvalues of $\Phi(T)$ are non-zero. Let $\epsilon':=\frac{1}{2}\min\left( \frac{\epsilon T}{2},1\right)$ and $S\in \mathbb R^{n\times n}$ be such that
	$S^{-1}CS=:J$ is in real Jordan normal form with the $1$ replaced by $\epsilon'  |\lambda_j|$ for each eigenvalue $\lambda_j$, i.e. $J$ is a block-diagonal matrix with blocks $J_j$ of the form
	$J_j=\left(\begin{array}{lllll}\lambda_j&\epsilon' |\lambda_j|&&&\\
	&\lambda_j&\epsilon' |\lambda_j|&&\\
	&&\ddots&\ddots&\\
	&&&\lambda_j&\epsilon' |\lambda_j|\\
	&&&&\lambda_j\end{array}\right)\in\mathbb R^{m_j\times m_j}$ for real eigenvalues $\lambda_j$ of $C$ and
	$J_j=\left(\begin{array}{ccccccc}\alpha_j&-\beta_j&\epsilon' r_j&&&&\\
	\beta_j&\alpha_j&&\epsilon' r_j&&&\\
	&&\ddots&&\ddots&&\\
&&&\alpha_j&-\beta_j&\epsilon' r_j&\\
	&&&\beta_j&\alpha_j&&\epsilon' r_j\\
	&&&&&\alpha_j&-\beta_j\\
	&&&&&
	\beta_j&\alpha_j\end{array}\right)\in\mathbb R^{2m_j\times 2m_j}$ for each pair of complex eigenvalues $\alpha_j\pm i\beta_j$ of $C$, where $r_j=\sqrt{\alpha_j^2+\beta_j^2}$ and $m_j$ denotes the dimension of the generalized eigenspace of one of them; note we have pairs of complex conjugate eigenvalues since $C$ is real.
	
	This can be achieved by letting  $S_1\in\mathbb R^{n\times n}$ be an invertible matrix such that
	$S^{-1}_1CS_1$ is the standard real Jordan Normal Form with  $1$ on the super diagonal. Then define $S_2$ to be a matrix of blocks
	$$\diag(1,\epsilon' |\lambda_j|,
	(\epsilon')^2|\lambda_j|^2,\ldots,(\epsilon')^{m_j-1}|\lambda_j|^{m_j-1})$$
	for real $\lambda_j$ and 
	
	$$\diag(1,1,\epsilon' |\lambda_j|,\epsilon' |\lambda_j|, \ldots,(\epsilon')^{m_j-1}|\lambda_j|^{m_j-1},(\epsilon')^{m_j-1}|\lambda_j|^{m_j-1})$$ for a pair of complex conjugate eigenvalues $\lambda_j$ and $\overline{\lambda_j}$. Setting $S=S_1S_2$ yields the result.

	For each of the blocks, we will now construct a matrix $K_j\in\mathbb C^{m_j\times m_j}$ for real eigenvalues $\lambda_j$ and $K_j\in\mathbb R^{2m_j\times 2m_j}$ for each pair of complex eigenvalues $\alpha_j\pm i\beta_j$ such that
	$$e^{K_j T}=J_j,$$
	which shows with $B=SAS^{-1}$, where $A:=\blockdiag(K_1,\ldots,K_r)$,
\begin{eqnarray}
	e^{BT}&=&Se^{AT}S^{-1}=S\blockdiag(e^{K_1T},\ldots,e^{K_rT})S^{-1}\nonumber\\
	&=&SJS^{-1}=C=\Phi(T).\label{*}
	\end{eqnarray}
	
	We distinguish between three cases:  $\lambda_j$ being real positive, real negative or complex.
	Using the series expansion of $
	\ln (1+x)$ we obtain for a nilpotent matrix $M\in \mathbb R^{n\times n}$
\begin{eqnarray}
	\exp\left(\sum_{k=1}^\infty \frac{(-1)^{k+1}}{k}M^k\right)&=&I+M;\label{log}
	\end{eqnarray}
	note  that the sum is actually finite.
	
	\vspace{0.3cm}
	
\noindent	\underline{\bf Case 1: $\lambda_j\in \mathbb R^+$}
	
	Writing $J_j=\lambda_j(I+\epsilon'  N)$  with the nilpotent matrix $N=
	\left(\begin{array}{llll}0&1&&\\
	&\ddots&\ddots&\\
	&&0&1\\
	&&&0\end{array}\right)\in\mathbb R^{m_j\times m_j}$, we define 
	$$K_j=\frac{1}{T}\left( (\ln \lambda_j)I+\sum_{k=1}^{m_j-1} \frac{(-1)^{k+1}}{k} (\epsilon')^kN^k\right)\in \mathbb R^{m_j\times m_j}.$$
Since $I$ and $N$ commute, we have with \eqref{log} and $N^k=0$ for $k\ge m_j$
	\begin{eqnarray*}
		\exp(K_j T)=\lambda_j \left(I+\epsilon' N\right)=J_j.
	\end{eqnarray*}

\vspace{0.5cm}

\noindent	\underline{\bf Case 2: $\lambda_j\in \mathbb R^-$}
	
 With the nilpotent matrix $N=
 \left(\begin{array}{llll}0&1&&\\
 &\ddots&\ddots&\\
 &&0&1\\
 &&&0\end{array}\right)\in\mathbb R^{m_j\times m_j}$ we	write $J_j=-|\lambda_j| (I-\epsilon' N)$ and define 
	$$K_j=\frac{1}{T}\left( (i\pi +\ln |\lambda_j|)I+\sum_{k=1}^{m_j-1}  \frac{(-1)^{k+1}}{k}(-\epsilon')^kN^k\right)\in \mathbb C^{m_j\times m_j}.$$
	Since $I$ and $N$ commute, and $N^k=0$ for $k\ge m_j$ we have with \eqref{log}
	\begin{eqnarray*}
		\exp(K_j T)=-|\lambda_j | \left(I-\epsilon' N\right)=J_j.
	\end{eqnarray*}

	\vspace{0.3cm}
	
	\noindent
	\underline{\bf Case 3: $\lambda_j=\alpha_j+i\beta_j$ with $\beta_j\not=0$ }
	
	We only consider one of the two complex conjugate eigenvalues $\lambda_j$ and $\overline{\lambda_j}$ of $\Phi(T)$.
		Writing $\lambda_j$ in polar coordinates gives
	$\lambda_j=\alpha_j+i\beta_j=r_je^{i\theta_j}=r_j\cos\theta_j + i r_j\sin \theta_j$ with $r_j>0$ and $\theta_j \in (0,2\pi)$. Then,
	defining $R_j=r_j\left(\begin{array}{cc}\cos \theta_j &-\sin  \theta_j\\
	\sin \theta_j&\cos \theta_j\end{array}\right)$,  $\cR=\blockdiag (R_j,R_j,\ldots,R_j)\in \mathbb R^{2m_j\times 2m_j}$ and the nilpotent matrix $\cN
	\in\mathbb R^{2m_j\times 2m_j}$ having $2\times 2$ blocks of $ \left(\begin{array}{cc}\cos \theta_j &\sin  \theta_j\\
	-\sin \theta_j&\cos \theta_j\end{array}\right)=\left(\begin{array}{cc}\cos \theta_j &-\sin  \theta_j\\
	\sin \theta_j&\cos \theta_j\end{array}\right)^{-1}$ above its diagonal, we have
	$J_j=\cR(I+\epsilon' \cN)$.	
	We define $\Theta=\left(\begin{array}{cc}0 &-\theta_j\\
	\theta_j&0\end{array}\right)$ and
	$$K_j=\frac{1}{T}\left( (\ln r_j)I+\blockdiag(\Theta,\Theta,\ldots,\Theta)+ \sum_{k=1}^{2m_j-2}\frac{(-1)^{k+1}}{k} (\epsilon') ^k\cN^k\right)\in \mathbb R^{2m_j\times 2m_j}.$$
Since $I$, $\blockdiag(\Theta,\Theta,\ldots,\Theta)$ and $\cN$ commute, we have, using $\cN^k=0$ for $k\ge 2m_j-1$ and \eqref{log}
	\begin{eqnarray*}
		\lefteqn{
			\exp(K_j T)}\\&=&r_j \blockdiag\left( \left(\begin{array}{cc}\cos \theta_j &-\sin  \theta_j\\
			\sin \theta_j&\cos \theta_j\end{array}\right),\ldots,\left(\begin{array}{cc}\cos \theta_j &-\sin  \theta_j\\
			\sin \theta_j&\cos \theta_j\end{array}\right)\right) (I+\epsilon'\cN)\\
		&=&J_j.
	\end{eqnarray*}
	
	We can now define $P\in C^s(\mathbb R, \mathbb C^{n\times n})$ by $P(t)=\Phi(t)e^{-Bt}$, which satisfies $P(0)=I$ and 
	\begin{eqnarray*}
		P(t+T)&=&\Phi(t+T)e^{-BT}e^{-Bt}\\
		&=&\Phi(t)\Phi(T)e^{-BT}e^{-Bt}\text{ by }\eqref{uni}\\
		&=&P(t)\text{ by }\eqref{*}
		\end{eqnarray*}
		for all $t\ge 0$, so in particular $P(T)=P(0)=I$. We can now write
			$$\Phi(t)=P(t)e^{Bt}.$$
			This shows the first statement of the proposition.
			
\vspace{0.3cm}	
	We now evaluate $A^*+A=\blockdiag(K_1^*+K_1,\ldots,K_r^*+K_r)$. Let us consider $K_j$ as in the three cases above. If $m_j=1$, then $K_j$ below does not contain the last sum with $\epsilon'$ and the form of $c_j$ is immediately clear.

	\vspace{0.3cm}
	
	\noindent
	\underline{\bf Case 1: $\lambda_j\in \mathbb R^+$}
	
	$$K_j=\frac{1}{T}\left( (\ln \lambda_j)I+\sum_{k=1}^{m_j-1} \frac{(-1)^{k+1}}{k}(\epsilon')^kN^k\right) \in\mathbb R^{m_j\times m_j};$$
	hence, for $\bw\in \mathbb C^{m_j}$
	\begin{eqnarray*}
		\lefteqn{
		\frac{1}{2}\bw^*(K_j^*+K_j)\bw}\\
		&=& 
		\frac{\ln \lambda_j}{T}\sum_{i=1}^{m_j}|w_i|^2\\
		&&+\epsilon'
		\frac{1}{2T}
		\left(\overline{w_1}w_2+w_1\overline{w_2}+\overline{w_2}w_3+w_2\overline{w_3}+\ldots+
		\overline{w_{m_j-1}}w_{m_j}+w_{m_j-1}\overline{w_{m_j}}\right)\\
		&&
		-\frac{(\epsilon')^2}{2}
		\frac{1}{2T}
		\left(\overline{w_1}w_3+w_1\overline{w_3}+\overline{w_2}w_4+w_2\overline{w_4}+\ldots+
		\overline{w_{m_j-2}}w_{m_j}+w_{m_j-2}\overline{w_{m_j}}\right)\\
		&&+\ldots\\
		&&
		+(-1)^{m_j}\frac{(\epsilon')^{m_j-1}}{m_j-1}
		\frac{1}{2T}
		\left(\overline{w_1}w_{m_j}+w_1 \overline{w_{m_j}}\right)\,.
	\end{eqnarray*}
	
	Note that the Cauchy--Schwarz inequality implies $\R\ni\overline{\xi}\eta+\xi\overline{\eta}\le |\xi|^2+|\eta|^2$, which yields that, using $\epsilon'=\frac{1}{2}\min\left(\frac{\epsilon T}{2},1\right)$ 
	\begin{eqnarray*}
		\frac{1}{2}\bw^*(K_j^*+K_j)\bw
		&\le &		\frac{\ln \lambda_j}{T}\sum_{i=1}^{m_j}|w_i|^2\\
		&&+\frac{\epsilon'+( \epsilon')^2 + \ldots+(\epsilon')^{m_j-1} }{T}\sum_{i=1}^{m_j}|w_i|^2\\
		&\le &
		\left(
		\frac{\ln \lambda_j}{T} +\epsilon\left(\frac{1}{2}+\frac{1}{4}+\frac{1}{8}+\ldots \right)\right)\sum_{i=1}^{m_j}|w_i|^2\\
		&\le&
		\left(
		\frac{\ln \lambda_j}{T} +\epsilon\right)\sum_{i=1}^{m_j}|w_i|^2.
	\end{eqnarray*}

	\vspace{0.3cm}
	\newpage
	
	\noindent
	\underline{\bf Case 2: $\lambda_j\in \mathbb R^-$}

	$$K_j=\frac{1}{T}\left( (i\pi +\ln |\lambda_j|)I+\sum_{k=1}^{m_j-1}  \frac{(-1)^{k+1}}{k}(-\epsilon')^kN^k\right)\in \mathbb C^{m_j\times m_j};$$
	hence, for $\bw\in \mathbb C^{m_j}$
	\begin{eqnarray*}
		\lefteqn{
		\frac{1}{2}\bw^*(K_j^*+K_j)\bw}\\
		&=& 
		\frac{\ln |\lambda_j|}{T}\sum_{i=1}^{m_j}|w_i|^2\\
		&&+\epsilon'
		\frac{1}{2T}
		\left(\overline{w_1}w_2+w_1\overline{w_2}+\overline{w_2}w_3+w_2\overline{w_3}+\ldots+
		\overline{w_{m_j-1}}w_{m_j}+w_{m_j-1}\overline{w_{m_j}}\right)\\
		&&
		-\frac{(\epsilon')^2}{2}
		\frac{1}{2T}
		\left(\overline{w_1}w_3+w_1\overline{w_3}+\overline{w_2}w_4+w_2\overline{w_4}+\ldots+
		\overline{w_{m_j-2}}w_{m_j}+w_{m_j-2}\overline{w_{m_j}}\right)\\
		&&+\ldots\\
		&&
		+(-1)^{m_j}\frac{(\epsilon')^{m_j-1}}{m_j-1}
		\frac{1}{2T}
		\left(\overline{w_1}w_{m_j}+w_1 \overline{w_{m_j}}\right)\\
		&\le&
		\left(
		\frac{\ln |\lambda_j|}{T} +\epsilon\right)\sum_{i=1}^{m_j}|w_i|^2
	\end{eqnarray*}
	similarly to case 1.

	\vspace{0.3cm}
	
	\noindent
	\underline{\bf Case 3: $\lambda_j=\alpha_j+i\beta_j$ with $\beta_j\not=0$ }
	
	Recall that 
	$$K_j=\frac{1}{T}\left( (\ln r_j)I+\blockdiag(\Theta,\Theta,\ldots,\Theta)+ \sum_{k=1}^{2m_j-2} \frac{(-1)^{k+1}}{k}(\epsilon' )^k\cN^k\right)\in \mathbb R^{2m_j\times 2m_j};$$
	where $\Theta=\left(\begin{array}{cc}0 &-\theta_j\\
	\theta_j&0\end{array}\right)$ and  the nilpotent matrix $\cN$ has $2\times 2$ blocks of $ \left(\begin{array}{cc}\cos \theta_j &\sin  \theta_j\\
	-\sin \theta_j&\cos \theta_j\end{array}\right) $ on its super diagonal.
	Note that all entries of $\cN^k$, $k\in \mathbb N$ are real and have an absolute value of $\le 1$ as they are of the form 
	$\cos(k\theta_j)$ and $\pm \sin(k\theta_j)$ for $k=1,2,\ldots$.
	Hence, for $\bw\in \mathbb C^{2m_j}$
		\begin{eqnarray*}
		\frac{1}{2}\bw^*(K_j^*+K_j)\bw
		&=& 
		\frac{\ln r_j}{T}\sum_{i=1}^{2m_j}|w_i|^2\\
		&&+\epsilon'
		\frac{1}{2T}
		\bigg(\cos\theta_j (\overline{w_1}w_3+w_1\overline{w_3})+\sin\theta_j (\overline{w_1}w_4+w_1\overline{w_4})\\
		&&
		-\sin\theta_j (\overline{w_2}w_3+w_2\overline{w_3})+\cos\theta_j (\overline{w_2}w_4+w_2\overline{w_4})+\ldots
		\bigg)+\ldots
		\end{eqnarray*}
		\begin{eqnarray*}
		&\le& 
		\frac{\ln r_j}{T}\sum_{i=1}^{2m_j}|w_i|^2\\
		&&+2
		\frac{\epsilon'+(\epsilon')^2+\ldots+(\epsilon')^{2m_j-1}}{T}\sum_{i=1}^{2m_j}|w_i|^2
		\\
		&\le &
		\left(
		\frac{\ln r_j}{T} +\epsilon\left(\frac{1}{2}+\frac{1}{4}+\frac{1}{8}+\ldots \right)\right)\sum_{i=1}^{2m_j}|w_i|^2\\
		&\le&
		\left(
		\frac{\ln r_j}{T} +\epsilon\right)\sum_{i=1}^{2m_j}|w_i|^2
	\end{eqnarray*}
	since $\epsilon'= \min \left(\frac{\epsilon T}{2},1\right)$.
	This shows the second statement of the proposition.

	To show that $(P^{-1}(t))^*(S^{-1})^*S^{-1}P^{-1}(t)$ has real entries, note that
	\begin{eqnarray*}
		P^{-1}(t)&=&e^{Bt}\Phi^{-1}(t)\\
		&=&Se^{At}S^{-1}\Phi^{-1}(t)
		\end{eqnarray*}
		so that
	\begin{eqnarray*} 
(P^{-1}(t))^*(S^{-1})^*S^{-1}P^{-1}(t)
&=&(\Phi^{-1}(t))^*(S^{-1})^*
(e^{At})^*e^{At}S^{-1}\Phi^{-1}(t).		
\end{eqnarray*}
It is thus sufficient to show that $(e^{At})^*e^{At}$ is real-valued, since all other matrices are real-valued.
Note that since 
	$A=\blockdiag(K_1,\ldots,K_r)$, we have
	\begin{eqnarray*}
		e^{At}&=&\blockdiag(e^{tK_1},\ldots,e^{tK_r}),\\
		(e^{tA})^*
		e^{tA}&=&\blockdiag((e^{tK_1})^*e^{tK_1},\ldots,(e^{tK_r})^*e^{tK_r})
	\end{eqnarray*} and the 
	blocks where $K_j$ have only real entries are trivially real-valued (cases 1 and 3).
	In case 2,   $K_j=\frac{1}{T}((i\pi+ \ln |\lambda_j|)I+N')$, where $N'\in\mathbb R^{m_j\times m_j}$ is a nilpotent, upper triangular matrix. Then, noting that $I$ and $N'$ commute,
	\begin{eqnarray*}
		e^{tK_j}&=&e^{\frac{  t}{T}(i\pi+\ln |\lambda_j|)} \exp\left(\frac{t}{T}N'\right)\\
		(e^{tK_j})^*&=&e^{\frac{  t}{T}(-i\pi+\ln |\lambda_j|)} \exp\left(\frac{t}{T}(N')^T\right)\\
		(e^{tK_j})^*e^{tK_j}&=&e^{\frac{  2t}{T}\ln |\lambda_j|}\exp\left(\frac{t}{T}(N')^T\right)\exp\left(\frac{t}{T}N'\right),
	\end{eqnarray*}
	which has real entries.
\end{proof}

\begin{corollary}\label{coro}
	Consider the ODE $\dot{\bx}=\bff(\bx)$ with $f\in C^\sigma(\mathbb R^n,\mathbb R^n)$, $\sigma\ge 2$ and let $S_t\bq$ be an exponentially stable periodic solution with period $T$ and $\bq\in \mathbb R^n$.
	 Then the first variation equation $\dot{\by}=D\bff(S_t \bq)\by$ is  of the form
	as in the previous proposition with $s=\sigma-1$; 1 is a single eigenvalue of $\Phi(T)$ with eigenvector $\bff(\bq)$ and all other eigenvalues of $\Phi(T)$ satisfy $|\lambda|<1$.
	More precisely, if $-\nu<0$ is the maximal real part of all non-trivial Floquet exponents, we have $\frac{\ln |\lambda|}{T}\le -\nu$.
	With the notations of Proposition \ref{prop} we can assume that $\lambda_1=1$ and $S\be_1=\bff(\bq)$. 
	
	Then we have for all $\epsilon> 0$
	\begin{eqnarray*}
		\bff(S_t\bq)&=&P(t)S\be_1\text{ for all }t\in\mathbb R\\
	\text{ and }\frac{1}{2}\bw^*(A^*+A)\bw&\le& \left(-\nu+\epsilon\right)(\|\bw\|^2-|w_1|^2).
	\end{eqnarray*}
	for all $\bw\in \mathbb C^n$, where $\|\bw\|=\sqrt{\bw^* \bw}$. 
\end{corollary}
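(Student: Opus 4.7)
For the first identity, I would observe that $t \mapsto \bff(S_t\bq)$ is itself a solution of the first variation equation $\dot{\by} = D\bff(S_t\bq)\by$: this follows by differentiating the relation $\frac{d}{dt}S_t\bq = \bff(S_t\bq)$ once more in $t$ and applying the chain rule. Uniqueness for the initial value problem then gives $\bff(S_t\bq) = \Phi(t)\bff(\bq)$. Substituting the Floquet decomposition $\Phi(t) = P(t)e^{Bt} = P(t)Se^{At}S^{-1}$ supplied by Proposition \ref{prop}, together with the assumption $\bff(\bq) = S\be_1$, the identity to prove reduces to $e^{At}\be_1 = \be_1$. Since $1$ is a simple eigenvalue of $\Phi(T)$, we have $m_1 = 1$, and the construction in Case 1 of Proposition \ref{prop} (with $\lambda_1 = 1$) yields the $1\times 1$ block $K_1 = \frac{\ln 1}{T} = 0$. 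The block-diagonal form of $A$ then gives $e^{At}\be_1 = \be_1$, which closes the first identity.

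For the spectral estimate, I would apply the bound already established in the second statement of Proposition \ref{prop},
\[
\frac{1}{2}\bw^*(A^*+A)\bw \;\le\; \sum_{j=1}^{r+c} c_j \sum_{i=1}^{m_j} \bigl|w_{i+\sum_{k=1}^{j-1}m_k}\bigr|^2,
\]
and then inspect each coefficient $c_j$. For $j=1$ the multiplier is $\lambda_1 = 1$ with $m_1 = 1$, hence $c_1 = \frac{\ln 1}{T} = 0$. For $j \ge 2$, every $\lambda_j$ corresponds to a non-trivial Floquet multiplier, so the hypothesis $\frac{\ln|\lambda_j|}{T} \le -\nu$ yields $c_j \le -\nu + \epsilon$ in both cases (when $m_j=1$ one actually gets $c_j \le -\nu$, and when $m_j \ge 2$ the additional $\epsilon$ absorbs the slack). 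Substituting produces
\[
\frac{1}{2}\bw^*(A^*+A)\bw \;\le\; (-\nu + \epsilon)\sum_{i=2}^{n}|w_i|^2 \;=\; (-\nu+\epsilon)\bigl(\|\bw\|^2 - |w_1|^2\bigr),
\]
which is the required bound.

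The only real subtlety is bookkeeping: one must be sure that the ordering of Jordan blocks chosen in Proposition \ref{prop} places the trivial multiplier $\lambda_1 = 1$ in the first position with $m_1 = 1$, so that the coordinate $w_1$ singled out in the estimate matches the direction $S\be_1 = \bff(\bq)$ used in the first identity. This is a harmless relabelling that is compatible with the freedom to reorder blocks when building $S$ and $A$ in Proposition \ref{prop}, so no technical obstacle remains beyond routine substitution.
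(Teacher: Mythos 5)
Your proof is correct and follows essentially the same route as the paper: you obtain $\bff(S_t\bq)=\Phi(t)\bff(\bq)$ from the fact that $\bff(S_t\bq)$ solves the first variation equation, substitute the Floquet normal form and use $K_1=0$, and then read the spectral estimate directly off Proposition \ref{prop} using $c_1=0$ and $c_j\le-\nu+\epsilon$ for $j\ge 2$. The paper's proof is just a terser version of the same computation.
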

\begin{proof}
	Since $\bff(S_t\bq)$ solves \eqref{var}, we have $\bff(S_t\bq)=P(t)e^{Bt}\bff(\bq)$ and, in particular for $t=T$,  $\bff(\bq)=\bff(S_T\bq)=e^{BT}\bff(\bq)$.
	Hence, \begin{eqnarray*}
		\bff(S_t\bq)&=&P(t)
Se^{At}S^{-1}\bff(\bq)\\
&=&P(t)Se^{At}\be_1\\
&=&P(t)	S\be_1
\end{eqnarray*}
since $K_1=0$ in the definition of $A$.
Proposition \ref{prop}  shows the result taking $\lambda_1=1$ and $m_1=1$ into account.
\end{proof}

\section{Converse theorem}\label{conv}

We will prove Theorem \ref{main}, showing that a contraction metric exists for an exponentially stable periodic orbit in the whole basin of attraction. Moreover, we can achieve the bound  $-\nu+\epsilon$ for $L_M$  for any fixed $\epsilon>0$, where $-\nu$ denotes the largest real part of all non-trivial Floquet exponents. 

Note that we consider contraction in directions $\bv$ perpendicular to $\bff(\bx)$ with respect to the  metric $M$, i.e. $\bv^TM(\bx)\bff(\bx)=0$. One could alternatively consider directions perpendicular to $\bff(\bx)$ with respect to the Euclidean metric, i.e. $\bv^T\bff(\bx)=0$, but then the function $L_M$ needs to reflect this, see \cite{other,leonov1}.

In the proof we will first construct $M=M_0$ on the periodic orbit $\Omega$ using Floquet theory. Then, we define a projection $\pi$ of points in a neighborhood $U$ of $\Omega$ onto $\Omega$ such that $(\bx-\pi(\bx))^TM_0(\pi(\bx))\bff(\pi(\bx))=0$, which will be used to synchronize the time of solutions such that
$\pi(S_\tau \bx)=S_{\theta_\bx(\tau)}\pi(\bx)$. Finally, $M$ will be defined through a scalar-valued function $V$ by
$M(\bx)=M_1(\bx)e^{2V(\bx)}$, where $M_1=M_0$ on the periodic orbit.

\begin{proof} [of Theorem \ref{main}]
Note that we assume $\bff\in C^\sigma(\mathbb R^n,\mathbb R^n)$ to achieve more detailed results concerning the smoothness and assume lower bounds on $\sigma$ as appropriate for each result; we always assume $\sigma\ge 2$.

\vspace{0.3cm}
\newpage
\noindent \underline{\bf I. Definition and properties of $M_0$ on $\Omega$}

\noindent We fix a point $\bq\in\Omega$ and consider the first variation equation
\begin{eqnarray}
\dot{\by}&=&D\bff(S_t\bq)\by\label{variation}
\end{eqnarray}
which is a $T$-periodic, linear equation for $\by$, and $D\bff\in C^{\sigma-1}$. By Proposition \ref{prop} and Corollary \ref{coro}  the principal fundamental matrix  solution $\Phi\in C^{\sigma-1}(\mathbb R,\mathbb R^{n\times n})$ of (\ref{variation}) with $\Phi(0)=I$ can be written as
$$\Phi(t)=P(S_t\bq)e^{Bt},$$
where $B\in\mathbb C^{n\times n}$;
note that $P\in C^{\sigma-1}(\mathbb R,  \mathbb C^{n\times n})$ can be defined on the periodic orbit as it is $T$-periodic. By the assumptions on $\Omega$, the eigenvalues of $B$ are $0$ with algebraic multiplicity one and the others have a real part  $\le -\nu<0$.

  We define $S$ as in Proposition \ref{prop} and define the $C^{\sigma-1}  $-function
  \begin{eqnarray}
M_0(S_t \bq)&=&P^{-1}(S_t\bq)^*(S^{-1})^*S^{-1}P^{-1}(S_t\bq)\in\mathbb R^{n\times n}.\label{M_0}
\end{eqnarray}
Note that $M_0(S_t\bq)$ is real by Proposition \ref{prop}, symmetric, since it is Hermitian and real, and positive definite  by
\begin{eqnarray}
	\bv^TM_0(S_t\bq)\bv	&=&\|S^{-1}P^{-1}(S_t\bq)\bv\|^2 \mbox{ for all } \bv\in\R^n\label{N=M}
\end{eqnarray}
and since $S^{-1}P^{-1}(S_t\bq)$ is non-singular.

  We will now calculate
   $L_{M_0}(S_t\bq;\bv)$.    First, we have for the orbital derivative 
     $$
  M_0'(S_t\bq)=(({P}^{-1}(S_t\bq))')^\ast (S^{-1})^\ast S^{-1}P^{-1}(S_t\bq) +P^{-1}(S_t\bq)^\ast (S^{-1})^\ast S^{-1}(P^{-1}(S_t\bq))'\,.
$$ 

Furthermore, by using $(P^{-1}(S_t\bq)P(S_t\bq ))'=0$, we obtain $$(P^{-1}(S_t\bq))'=-P^{-1}(S_t\bq)(P(S_t\bq))' P^{-1}(S_t\bq).$$ In addition, since $t\mapsto P(S_t\bq )e^{Bt }$ is a solution of \eqref{variation}, we have
  $(P(S_t\bq))'=D\bff(S_t\bq)P(S_t\bq)- P(S_t\bq )B$. Altogether, we get
  \begin{eqnarray}
 (P^{-1}(S_t\bq ))'&=&-P^{-1}(S_t\bq)D\bff(S_t\bq) +B P^{-1}(S_t\bq)\,.\label{alto}
  \end{eqnarray}
  Hence,
  \begin{align*}
  M_0'(S_t\bq) &=-D\bff(S_t\bq)^TP^{-1}(S_t\bq)^\ast  (S^{-1})^\ast S^{-1}P^{-1}(S_t\bq)\\
   &\quad \,+P^{-1}(S_t\bq)^\ast B^\ast (S^{-1})^\ast S^{-1}P^{-1}(S_t\bq)
    \\
    &\quad\, -P^{-1}(S_t\bq)^\ast (S^{-1})^\ast S^{-1}P^{-1}(S_t\bq)D\bff(S_t\bq)   \\
    &\quad\, +P^{-1}(S_t\bq)^\ast (S^{-1})^\ast S^{-1}B P^{-1}(S_t\bq).
  \end{align*}
  
  Thus, we obtain
  \begin{eqnarray*}
    \lefteqn{M_0(S_t\bq) D\bff(S_t\bq)+ D\bff(S_t\bq)^TM_0(S_t\bq)+ M_0'(S_t\bq)}
    \\
    &= &    P^{-1}(S_t\bq)^\ast B^\ast (S^{-1})^\ast S^{-1}P^{-1}(S_t\bq) +P^{-1}(S_t\bq)^\ast (S^{-1})^\ast S^{-1}B P^{-1}(S_t\bq)\,.
  \end{eqnarray*}
  Furthermore, we have for $\bv\in \R^n$
  \begin{eqnarray}
  L_{M_0}(S_t\bq;\bv)&=&
  \frac{1}{2}  \lefteqn{ \bv^T\left(M_0(S_t\bq) D\bff(S_t\bq)+ D\bff(S_t\bq)^TM_0(S_t\bq)+ M_0'(S_t\bq)\right)\bv}\nonumber
    \\
     &=& \bv^T P^{-1}(S_t\bq)^\ast (S^{-1})^\ast \left( \frac{1}{2}\left(S^\ast B^\ast (S^{-1})^\ast+S^{-1}BS\right)\right) S^{-1}P^{-1}(S_t\bq)\bv\nonumber
    \\
    &=&\bw^\ast\left(\frac{1}{2}\left(A^\ast+A\right)\right)\bw\,,\label{**}
  \end{eqnarray}
  where $\bw:=S^{-1}P^{-1}(S_t\bq)\bv\in \C^n$ and $A=S^{-1}BS$.
  
  For $\bv\in\mathbb R^n$ with $\bv^TM_0(S_t\bq)\bv=1$ and $\bff(S_t\bq)^TM_0(S_t\bq)\bv=0$  we have
  \begin{eqnarray*}
  \bw^*\bw&=&\bv^T P^{-1}(S_t\bq)^* (S^{-1})^*
 S^{-1}P^{-1}(S_t\bq)\bv\\
  &=&\bv^TM_0(S_t\bq)\bv\\
  &=&1
  \end{eqnarray*}
  and, using $\be_1=S^{-1}P^{-1}(S_t\bq)\bff(S_t\bq)$ from Corollary \ref{coro} \begin{eqnarray*}
  w_1&=&\be_1^*  \bw\\
  &=&\bff(S_t\bq)^T P^{-1}(S_t\bq)^* (S^{-1})^*
  S^{-1}P^{-1}(S_t\bq)\bv\\
  &=&\bff(S_t\bq)^TM_0(S_t\bq)\bv\\
  &=&0.
  \end{eqnarray*}

This shows with Corollary \ref{coro} and \eqref{**}
\begin{eqnarray}
	L_{M_0}(S_t\bq)&=&
	\max_{\bv^TM_0(S_t\bq)\bv=1,\bv^TM_0(S_t\bq)\bff(S_t\bq)=0}
	L_{M_0}(S_t \bq;\bv)\n \\
	&\le&
	\max_{\bw\in \mathbb C^n,w_1=0,\|\bw\|=1}
	(-\nu+\epsilon)(\|\bw\|^2-|w_1|^2)\n\\
	& \le&-\nu+\epsilon.\label{LMv}
\end{eqnarray}

\vspace{0.3cm}
\noindent \underline{\bf II. Projection}

\noindent
Fix a point $\bq\in \Omega$ on the periodic orbit.
For $\bx$ near the periodic orbit we define the projection
$\pi({\bf x})=S_\theta \bq$ on the periodic orbit orthogonal
to $\bff(S_\theta \bq)$ with respect to the scalar product $\langle \bv,\bw\rangle_{M_0(S_\theta \bq)}=\bv^T M_0(S_\theta \bq)\bw$ implicitly by \eqref{proj} below.	
The following lemma is based on the implicit function theorem and shows that the projection can be defined in a neighborhood of the periodic orbit, not just locally.

\begin{lemma}\label{imp}
Let $\Omega$ be an exponentially stable periodic orbit of $\dot{\bx}=\bff(\bx)$ where $\bff\in C^\sigma( \mathbb R^n,\mathbb R^n)$ with $\sigma\ge 2$.

Then there is a compact, positively invariant neighborhood $U$ of $\Omega$ with $U\subset A(\Omega)$ and a function $\pi\in C^{\sigma-1}(U,\Omega)$ such that $\pi(\bx)=\bx$ if and only if $\bx\in \Omega$. Moreover, for all $\bx\in U$ we have
\begin{eqnarray}
	( {\bf x}-\pi(\bx))^TM_0(\pi(\bx))\bff(\pi(\bx))&=&0.\label{proj}
	\end{eqnarray}
\end{lemma}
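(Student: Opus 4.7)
The plan is to set up the projection as the solution of an implicit equation and apply the implicit function theorem, first locally on $\Omega$, then globally in a tubular neighbourhood, and finally shrink that neighbourhood to a compact positively invariant set using exponential stability of $\Omega$.

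First I would fix the reference point $\bq\in\Omega$ and consider the auxiliary map
\begin{equation*}
G\colon\mathbb R^n\times\mathbb R\longrightarrow\mathbb R,\qquad G(\bx,\theta):=(\bx-S_\theta\bq)^T M_0(S_\theta\bq)\bff(S_\theta\bq).
\end{equation*}
Since $\bff\in C^\sigma$ and $M_0\in C^{\sigma-1}$, the map $G$ is $C^{\sigma-1}$ jointly; moreover $G$ is $T$-periodic in $\theta$. For any $\theta_0\in\mathbb R$ and $\bx_0:=S_{\theta_0}\bq\in\Omega$ one has $G(\bx_0,\theta_0)=0$. Using $\tfrac{d}{d\theta}S_\theta\bq=\bff(S_\theta\bq)$, the derivative in $\theta$ at such a diagonal point reduces to
\begin{equation*}
\partial_\theta G(\bx_0,\theta_0)=-\bff(S_{\theta_0}\bq)^T M_0(S_{\theta_0}\bq)\bff(S_{\theta_0}\bq)<0,
\end{equation*}
the strict inequality coming from positive definiteness of $M_0$ and the absence of equilibria on $\Omega$. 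The implicit function theorem then supplies, on an open neighbourhood of $\bx_0$, a unique $C^{\sigma-1}$ function $\theta(\bx)$ with $\theta(\bx_0)=\theta_0$ solving $G(\bx,\theta(\bx))=0$, and one sets $\pi(\bx):=S_{\theta(\bx)}\bq$.

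Next I would globalise: by compactness of $\Omega$ these local solutions can be patched, with the period $T$ making $\theta$ well-defined in $\mathbb R/T\mathbb Z$, so that $\pi$ is unambiguously defined on some open tubular neighbourhood $W\supset\Omega$. The main technical point here is uniqueness: one must verify that shrinking $W$ enough kills spurious second roots of $G(\bx,\cdot)$. This follows from the uniform lower bound $\partial_\theta G\le -\alpha<0$ obtained on $\Omega\times\Omega$ by compactness together with a continuity/quantitative IFT argument — exactly the kind of estimate the paper is otherwise careful with. The identity $\pi(\bx)=\bx\Leftrightarrow\bx\in\Omega$ then follows from construction: if $\bx\in\Omega$ write $\bx=S_{\theta_0}\bq$ so that $\theta_0$ solves $G(\bx,\cdot)=0$ and uniqueness forces $\theta(\bx)=\theta_0$, hence $\pi(\bx)=\bx$; conversely $\pi(\bx)\in\Omega$ by definition.

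Finally I would shrink $W$ to a compact positively invariant $U\subset W\cap A(\Omega)$. Because $\Omega$ is exponentially stable, any sufficiently small neighbourhood $W'\subset W$ of $\Omega$ satisfies $S_tW'\subset W$ for $t\ge 0$ with $S_tW'\to\Omega$. A standard device is to pick $W'$ so small that $\Omega\subset W'\subset\overline{W'}\subset W$, then set
\begin{equation*}
U:=\overline{\bigcup_{t\ge 0}S_tW'},
\end{equation*}
which is compact (by exponential contraction onto $\Omega$), contained in $W$, positively invariant, and inside $A(\Omega)$. The restriction of $\pi$ to $U$ is the sought $C^{\sigma-1}$ projection, and \eqref{proj} is precisely $G(\bx,\theta(\bx))=0$. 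The main obstacle, as indicated, is the global uniqueness of the implicit solution on a full tubular neighbourhood rather than merely locally at each point of $\Omega$; the rest is routine application of the implicit function theorem and exponential stability.
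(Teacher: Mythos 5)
Your overall plan --- a quantitative local implicit function theorem applied at each point of $\Omega$ to $G(\bx,\theta)=(\bx-S_\theta\bq)^T M_0(S_\theta\bq)\bff(S_\theta\bq)$, patching the local branches over a tubular neighbourhood, then shrinking to a compact positively invariant $U$ --- is exactly the paper's. There is, however, one substantive gap in the globalisation step. The uniform negativity of $\partial_\theta G$ near the diagonal $\{(S_{\theta_0}\bq,\theta_0)\}$ only ensures that $G(\bx,\cdot)$ has at most one zero in a short $\theta$-interval around each candidate $\theta_0$; it does not by itself exclude a second zero of $G(\bx,\cdot)$ at some $\theta_1$ far from $\theta_0$ modulo $T$. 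Such a spurious root would arise precisely if the curve $\Omega$ came Euclidean-close to itself at widely separated parameter values, and no bound on $\partial_\theta G$, however uniform, rules that out. So your phrase ``uniform lower bound on $\partial_\theta G$ together with a continuity/quantitative IFT argument'' does not cover the actual obstruction.

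The missing ingredient is a positive injectivity radius for the parametrisation $\theta\mapsto S_\theta\bp$ of $\Omega$. With the local radius $\epsilon$ from the quantitative IFT fixed, one sets
\[
c:=\min_{\bp\in\Omega}\ \min_{\theta\in[-T/2,T/2]\setminus(-\epsilon/2,\epsilon/2)}\|S_\theta\bp-\bp\|>0,
\]
strictly positive because $\Omega$ is compact and the parametrisation is injective on $[0,T)$. This constant converts Euclidean proximity of two points of $\Omega$ into proximity of their $\theta$-parameters modulo $T$, which is precisely what is needed to show that any two overlapping local branches $p_{\bx_i},p_{\bx_j}$ coincide and that $G(\bx,\cdot)$ has a unique zero in $[0,T)$ for $\bx$ close enough to $\Omega$. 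Once this constant is in hand, the remaining parts of your sketch --- the construction of the compact positively invariant $U$ from exponential stability and the biconditional $\pi(\bx)=\bx\Leftrightarrow\bx\in\Omega$ --- go through as you describe and match the paper.
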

\begin{proof} Fix a point $\bq\in \Omega$ and define $M_0$ by \eqref{M_0}.
Define the $C^{\sigma-1}$ function 
$$G(\bx,\theta)=(\bx-S_\theta \bq)^TM_0(S_\theta \bq)\bff(S_\theta \bq)$$
 for $\bx\in \mathbb R^n$, $\theta\in \mathbb R$.

Define the following constants:
\begin{eqnarray*}
	\min_{\bp\in \Omega}\|\bff(\bp)\|&= & c_1>0\\
	\max_{\bp\in \Omega}\|\bff(\bp)\|&= & c_2\\
	\max_{\bp\in \Omega}\|D\bff(\bp)\|&=&c_3\\
	\max_{\bp\in \Omega}\|P(\bp)\|&=&p_1\\
	\max_{\bp\in \Omega}\|P^{-1}(\bp)\|&=&p_2\\
	\min_{\bp\in \Omega}\|M_0(\bp)\|&= & m_1>0\\
	\max_{\bp\in \Omega}\|M_0(\bp)\|&= & m_2\\
	\max_{\bp\in \Omega}\|M_0'(\bp)\|&= & m_3,
\end{eqnarray*}
with the matrix norm $\|\cdot\|=\|\cdot\|_2$, which is induced by the vector norm $\|\cdot\|=\|\cdot\|_2$ and is sub-multiplicative. 
We will first prove the following quantitative version of the local implicit function theorem, using that $\theta$ is one-dimensional.

\begin{lemma}\label{implicit}
There are constants $\delta,\epsilon>0$ such that 
for each point $\bx_0=S_{\theta_0}\bq \in \Omega$, there is a function $p_{\bx_0}\in C^{\sigma-1}( B_\delta(\bx_0), B_\epsilon(\theta_0))$ such that 
  for all $(\bx,\theta)\in B_\delta(\bx_0)\times B_\epsilon(\theta_0)$
$$G(\bx,\theta)=0\Longleftrightarrow \theta=p_{\bx_0}(\bx).$$
If $\bx\in B_{\delta/2}(\bx_0)$, then $p_{\bx_0}(\bx)\in B_{\epsilon/2}(\theta_0).$
\end{lemma}
\begin{proof}
We have 
\begin{eqnarray*}
G_\theta(\bx,\theta)&=&\frac{d}{d\theta}( {\bf x}-S_\theta \bq)^TM_0(S_\theta \bq)\bff(S_\theta \bq)\\
&=&-\bff(S_\theta \bq)^TM_0(S_\theta\bq)\bff(S_\theta \bq)\\
&&+
( {\bf x}-S_\theta \bq)^TM_0'(S_\theta \bq)\bff(S_\theta \bq)\\
&&
+( {\bf x}-S_\theta \bq)^TM_0(S_\theta \bq)D\bff(S_\theta
\bq)\bff(S_\theta \bq)\,.
\end{eqnarray*}
With $\min_{\theta \in
[0,T]}\bff(S_\theta \bq)^TM_0(S_\theta\bq)\bff(S_\theta \bq)\ge c_1^2m_1>0$ we have for all 
$ \|{\bf x}-S_\theta \bq\|<\delta_2:= \frac{c_1^2m_1}{2c_2 (m_3+m_1c_3)}$
\begin{eqnarray*}
G_\theta(\bx,\theta)&<& -c_1^2m_1
+\delta_2c_2 (m_3+m_1c_3)\
= \ -\frac{c_1^2m_1}{2}<0. 
\end{eqnarray*}

Let $\delta_1:=\frac{\delta_2}{2} $ and $\epsilon_1:=\frac{\delta_2}{2c_2}$.
For 
 any $\bx_0=S_{\theta_0}\bq\in \Omega$ we have for all  $\bx\in \mathbb R^n$ with
$\|\bx-\bx_0\|<\delta_1$ and all $\theta\in \mathbb R$ with  $|\theta-\theta_0|<\epsilon_1$
\begin{eqnarray}
G_\theta(\bx,\theta)&<& -\frac{c_1^2m_1}{2}<0 \label{Ftheta}
\end{eqnarray} since
$$\|\bx-S_\theta\bq\|\le \|\bx-\bx_0\|+\|S_{\theta_0}\bq-S_\theta \bq\|<
\delta_1+|\theta_0-\theta| c_2<\delta_2.$$

Since $G(\bx_0,\theta_0)=0$ we have with $\epsilon:=\epsilon_1/2$ by \eqref{Ftheta}  \begin{eqnarray*}
G(\bx_0,\theta_0+\epsilon)&<&-\frac{c_1^2m_1}{2}\epsilon,\\
G(\bx_0,\theta_0-\epsilon)&>&\frac{c_1^2m_1}{2}\epsilon.
\end{eqnarray*}

Furthermore, we have
\begin{eqnarray*}
\nabla_\bx G(\bx,\theta)&=&\bff(S_\theta \bq)^TM_0(S_\theta \bq),\\
\|\nabla_\bx G(\bx,\theta)\|&\le&c_2m_1
\end{eqnarray*}
for all $\bx\in \mathbb R^n$ and $\theta\in \mathbb R$.

Define $\delta:=\min\left(\delta_1,\frac{c_1^2m_1}{4c_2m_1}\epsilon\right)$. 
For $\|\bx-\bx_0\|< \delta$ we have
\begin{eqnarray*}G(\bx,\theta_0+\epsilon)&\le&G(\bx_0,\theta_0+\epsilon)\\
	&&+\int_0^1
\nabla_\bx G(\bx_0+\lambda (\bx-\bx_0),\theta_0+\epsilon)\,d\lambda \cdot (\bx-\bx_0) \\
&<&-\frac{c_1^2m_1}{2}\epsilon+c_2m_1 \delta \\
&\le &-\frac{c_1^2m_1}{4}\epsilon  \ < \ 0\\
\text{ and }G(\bx,\theta_0-\epsilon)&>&\frac{c_1^2m_1}{4}\epsilon  \ >\ 0. 
\end{eqnarray*}
Since $G(\bx,\theta)$ is strictly decreasing with respect to $\theta$ in $B_\epsilon(\theta_0)$ by \eqref{Ftheta}  , the intermediate value theorem implies that there is a unique $\theta^*\in (\theta_0-\epsilon,\theta_0+\epsilon)$ such that $G(\bx,\theta^*)=0$, which defines a function $p_{\bx_0}(\bx)=\theta^*$. The statement for $\epsilon/2$ and $\delta/2$ follows similarly.
The smoothness of $p_{\bx_0}$ follows by the classical implicit function theorem, since $G\in C^{\sigma-1}$.
\end{proof}

Now we want to show the uniqueness of the function in a suitable neighborhood $\tU$ of $\Omega$. Denote the minimal period of the periodic orbit by $T$; we can assume that $\epsilon<T$.
Define
$$c:=\min_{\bp\in \Omega}\min_{\theta\in [-T/2,T/2]\setminus (-\epsilon/2,\epsilon/2)}\|S_\theta\bp-\bp\|>0.$$
We can conclude that if $\|S_\theta\bp-\bp\|\le c/2$ with $\bp\in \Omega$ and $|\theta|\le  T/2$, then $|\theta|<\epsilon/2$.

Let $\delta'=\min(\delta/2,c/4)$. Since $\Omega$ is compact and $\Omega\subset \bigcup_{\bx_0\in \Omega}B_{\delta'}(\bx_0)$, there is a finite number of $\bx_i=S_{\theta_i}\bq\in \Omega$, $i=1\ldots,N$, with 
\begin{eqnarray}
	\Omega&\subset& \bigcup_{i=1}^N B_{\delta'}(\bx_i)=:\tU,\label{cover}
	\end{eqnarray}
such that $\tU$ is an open neighborhood of $\Omega$.
We want to show that the $p_{\bx_i}=p_i$ define a unique function $p\colon \tU\to  S^1_T$, where $S^1_T$ are the reals modulo $T$ such that $p=p_i$ on $B_{\delta'}(\bx_i)$. We need to show that if $\bx\in B_{\delta'}(\bx_i)\cap B_{\delta'}(\bx_j)$, then $p_i(\bx)=p_j(\bx)$.

Let  $\bx\in B_{\delta'}(\bx_i)\cap B_{\delta'}(\bx_j)$ and, without loss of generality,  $|\theta_j-\theta_i|\le T/2$ since the $\theta_i$ and $\theta_j$ are uniquely defined only modulo $T$. Then
\begin{eqnarray*}
\|\bx_i-S_{\theta_j-\theta_i}\bx_i\|&=&
\|\bx_i-\bx_j\|\\&\le &\|\bx_i-\bx\|+\|\bx-\bx_j\|\\
&<&2\delta'\ \le\  \min(\delta,c/2).
\end{eqnarray*}
Hence, $|\theta_j-\theta_i|<\epsilon/2$.

Since $\bx\in B_{\delta/2}(\bx_i)\cap B_{\delta/2}(\bx_j)$, we have $p_i(\bx)\in B_{\epsilon/2}(\theta_i)$ and  $p_j(\bx)\in B_{\epsilon/2}(\theta_j)$ by Lemma \ref{implicit}. 
 Then
\begin{eqnarray*}
|p_i(\bx)-\theta_j| 
&\le&|p_i(\bx)-\theta_i| +|\theta_i-\theta_j| \
< \ \epsilon
\end{eqnarray*}
and similarly $p_j(\bx)\in B_\epsilon(\theta_i)$. 
Moreover, $\bx\in B_{\delta}(\bx_j)\cap B_{\delta}(\bx_j)$.   Lemma \ref{implicit} implies that $\theta=p_i(\bx)$ if and only if $G(\bx,\theta)=0$ if and only if $\theta=p_j(\bx)$, which shows $p_i(\bx)=p_j(\bx)$.

Since $\Omega$ is stable, we can choose $\Omega\subset U^\circ\subset U\subset \tU$ such that $U$ is compact and positively invariant. For $\bx\in U$ define $\pi(\bx)=S_{p(\bx)}\bq$. Since $p$ is defined by $p_{\bx_i}$, we have by Lemma \ref{implicit} $0=G(\bx,p(\bx))=(\bx-\pi(\bx))^TM_0(\pi(\bx))\bff(\pi(\bx))$.

If $\bx=S_\theta\bq \in \Omega$, then there is a $\bx_i=S_{\theta_i}\bq\in \Omega$ by \eqref{cover} such that $\bx\in B_{\delta'}(\bx_i)$ and thus, as above, $|\theta-\theta_i|<\epsilon/2$. Hence, by Lemma \ref{implicit}, as $\bx\in B_\delta(\bx_i)$ and $\theta\in B_\epsilon(\theta_i)$, $p_i(\bx)=\theta$ and thus $\pi(\bx)=\bx$, as this satisfies $0=G(\bx,\theta)$. If $\bx\not\in \Omega$, then, since $\pi(\bx)\in \Omega$, $\bx\not=\pi(\bx)$. This shows the lemma.
\end{proof}
%
%

\noindent \underline{\bf III. Synchronization}

\noindent In this step we synchronize the time between the solution $S_t\bx$ and the solution on the periodic orbit $S_\theta \pi(\bx)$ such that \eqref{first} holds. This will enable us to later define a distance between $S_t\bx$ and $\Omega$ in Step IV.
\begin{definition}
		For $\bx\in U$ we can define $\theta_\bx\in C^{\sigma-1}(\mathbb R^+_0,\mathbb R)$ by $\theta_\bx(0)=0$ and 
	\begin{eqnarray}
		S_{\theta_\bx(t)}\pi(\bx)&=&\pi(S_t \bx)\label{first}
		\end{eqnarray}
	for all $t\ge 0$.
	
	We have \begin{eqnarray}
	\dot{\theta}_\bx (t)
		&=&(\bff(S_t{\bf x})^TM_0(S_{\theta_\bx(t)} \pi(\bx))\bff(S_{\theta_\bx(t)} \pi(\bx)))\nonumber\\
		&&\bigg(\bff(S_{\theta_\bx(t)} \pi(\bx))^TM_0(S_{\theta_\bx(t)} \pi(\bx))\bff(S_{\theta_\bx(t)} \pi(\bx))\nonumber\\
		&& -
		( S_t{\bf x}-S_{\theta_\bx(t)} \pi(\bx))^T
		\big[M_0'(S_{\theta_\bx(t)} \pi(\bx)) \bff(S_{\theta_\bx(t)} \pi(\bx))\nonumber\\
	&&\hspace{0.5cm}	+M_0(S_{\theta_\bx(t)} \pi(\bx))D\bff(S_{\theta_\bx(t)} \pi(\bx)) \bff(S_{\theta_\bx(t)} \pi(\bx))\big]\bigg)^{-1}.\label{eqbruch}
	\end{eqnarray}	
	The denominator of \eqref{eqbruch} is strictly positive for all $t\ge 0$ and $\bx \in U$.
\end{definition}
\begin{proof}
Denote $\pi(\bx)=:\bp\in \Omega$. Observe, that both sides of \eqref{first} equal for $t=0$.
 For any $t\ge 0$, $S_t\bx\in U$ and  $\pi(S_t \bx)$ denotes a point on the periodic orbit, so we can write it as
$\pi(S_t \bx)=S_{\theta_\bx(t)}\bp$. Note that $\theta_\bx(t)$ is only uniquely defined modulo $T$, however, it is uniquely defined by the requirement that $\theta_\bx$  is a continuous function.

By \eqref{proj}, we have
$$(S_t \bx-S_{\theta_\bx(t)}\bp)^TM_0(S_{\theta_\bx(t)} \bp)\bff(S_{\theta_\bx(t)}\bp)=0.$$
Hence, $\theta_\bx(t)$ is implicitly defined by
 \begin{eqnarray}
 	Q(t,\theta)&=&(S_t{\bf x}-S_\theta \bp)^TM_0(S_\theta \bp)\bff(S_\theta \bp)=0.
 	\label{theta}
 \end{eqnarray}
 Note that $\theta_\bx\in C^{\sigma-1}(\mathbb R^+_0,\mathbb R)$ by the Implicit
 Function Theorem which implies 
  \begin{eqnarray*}
 	\frac{ d\theta_\bx}{dt}
 	&=&-\frac{\partial_t Q(t,\theta)}{\partial_\theta Q(t,\theta)}\bigg|_{\theta=\theta_\bx(t)}
	\nonumber\\
 	&=&(\bff(S_t{\bf x})^TM_0(S_{\theta_\bx(t)} \bp)\bff(S_{\theta_\bx(t)} \bp))\nonumber\\
 	&&\bigg(\bff(S_{\theta_\bx(t)} \bp)^TM_0(S_{\theta_\bx(t)} \bp)\bff(S_{\theta_\bx(t)} \bp)\nonumber\\
 	&&\hspace{0.3cm}-
 	( S_t{\bf x}-S_{\theta_\bx(t)} \bp)^TM_0'(S_{\theta_\bx(t)} \bp) \bff(S_{\theta_\bx(t)} \bp)\nonumber\\
 	&&\hspace{0.3cm}-( S_t{\bf x}-S_{\theta_\bx(t)} \bp)^TM_0(S_{\theta_\bx(t)} \bp)D\bff(S_{\theta_\bx(t)} \bp) \bff(S_{\theta_\bx(t)} \bp)\bigg)^{-1}.
 	 \end{eqnarray*}
	 
	With the notations of the proof of Lemma \ref{imp}, for $S_t\bx\in U$ there is a point $\bx_i=S_{\theta_i} \bq \in \Omega$ such that $S_t\bx\in B_{\delta'}(\bx_i)$. We have $S_t\bx\in B_\delta(\bx_i)$ and, modulo $T$, we have $p_i(S_t\bx)=\theta_\bx(t)\in B_\epsilon(\theta_i)$. Hence, the denominator is $>\frac{c_1^2m_1}{2}$ by \eqref{Ftheta}.
 \end{proof}
	
\begin{lemma}\label{change}
For $\bx\in U$ we have
$$S_{\theta_{S_\tau \bx}(t)}\pi(S_\tau \bx)=S_{\theta_\bx(t+\tau)}\pi(\bx)$$
for all $t,\tau\ge 0$.
\end{lemma}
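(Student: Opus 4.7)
The plan is to prove this short consistency statement purely by chaining the defining identity \eqref{first} for $\theta_\bx$ with the semigroup property of the autonomous flow $S_t$. No implicit function theorem or derivative computation is needed here; the content of the lemma is merely that the synchronization commutes with time translation of the base point.

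First I would observe that $U$ is positively invariant by Lemma \ref{imp}, hence $S_\tau \bx \in U$ for every $\tau \ge 0$. Consequently $\pi(S_\tau \bx)$ lies on $\Omega$ and the map $\theta_{S_\tau \bx}\in C^{\sigma-1}(\mathbb R^+_0,\mathbb R)$ exists and satisfies the defining relation \eqref{first} with base point $S_\tau \bx$.

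Then I would apply \eqref{first} twice. Applied to the base point $\bx$ evaluated at time $t+\tau$ it yields $S_{\theta_\bx(t+\tau)}\pi(\bx) = \pi(S_{t+\tau}\bx)$, while applied to the base point $S_\tau \bx$ evaluated at time $t$ it yields $S_{\theta_{S_\tau \bx}(t)}\pi(S_\tau \bx) = \pi(S_t(S_\tau \bx))$. Since \eqref{ODE} is autonomous we have the semigroup identity $S_t \circ S_\tau = S_{t+\tau}$, so the two right-hand sides agree as points of $\Omega$; hence so do the two left-hand sides, which is exactly the claim.

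The only point that requires a word of caution is that $\theta_\bx(t)$ is only uniquely determined modulo $T$ by the implicit equation \eqref{theta}; the continuous selection with $\theta_\bx(0)=0$ fixes a real-valued representative for each base point, but the two representatives $\theta_\bx(t+\tau)$ and $\theta_{S_\tau \bx}(t)$ may differ by a multiple of $T$. However, the lemma is phrased as an equality of points on the periodic orbit $\Omega$ rather than of real numbers, so this ambiguity is automatically absorbed. I do not anticipate any genuine obstacle: once positive invariance of $U$ is invoked, the whole argument reduces to one use of the flow's semigroup property.
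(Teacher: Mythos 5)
Your proof is correct and follows essentially the same approach as the paper: apply the defining relation \eqref{first} to the two base points $\bx$ (at time $t+\tau$) and $S_\tau\bx$ (at time $t$), then identify the right-hand sides via the semigroup property of the autonomous flow. The remark on the mod-$T$ ambiguity is a sensible clarification but not strictly needed, since, as you note, the lemma is an equality of points on $\Omega$.
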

\begin{proof}
We apply \eqref{first} to the point $S_\tau \bx$ and the time $t$, obtaining
$$S_{\theta_{S_\tau \bx}(t)}\pi(S_\tau \bx)=\pi(S_t S_\tau \bx).$$
Now we apply \eqref{first} to the point $ \bx$ and the time $t+\tau$, obtaining
$$S_{\theta_\bx(t+\tau)}\pi(\bx)=\pi(S_{t+\tau} \bx).$$
As both right-hand sides are equal by the semi-flow property, this proves the 
statement.
\end{proof}

\vspace{0.3cm}
\noindent \underline{\bf IV. Distance to the periodic orbit}

\noindent
In the following lemma we define a distance of points in $U$ to the periodic orbit, and we show that it decreases exponentially.
%
%

\begin{lemma}\label{def_d}
Let $\epsilon<\min(1,\nu/2)$ and $\sigma\ge 2$. Then there is a positively invariant, compact neighborhood $U$ of the periodic orbit $\Omega$ such that
	the function $d\in C^{\sigma-1}(U,\mathbb R^+_0)$, defined by 	
$$d(\bx)=(\bx-\pi(\bx))^TM_0(\pi(\bx))(\bx-\pi(\bx))$$
 satisfies $d(\bx)=0$ if and only if $\bx\in \Omega$. Moreover, $d'(\bx)<0$ for all $\bx\in U\setminus \Omega$
and 
\begin{eqnarray*}
d(S_t\bx)&\le& e^{2(-\nu+2\epsilon) t}d(\bx)\text{    for all $\bx\in U$ and all $t\ge 0$,}\\
1-\epsilon&\le& \dot{\theta}_\bx(t)\ \le  \ 1+\epsilon\text{  for all $t\ge 0$}.
\end{eqnarray*}
\end{lemma}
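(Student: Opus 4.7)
The plan is to compute the orbital derivative $d'(\bx)$ explicitly, Taylor expand around $\pi(\bx)\in\Omega$ so that the leading quadratic part is exactly $2L_{M_0}(\pi(\bx);\bx-\pi(\bx))$, and then use the bound $L_{M_0}\leq -\nu+\epsilon$ from Step~I together with a smallness-of-the-tube argument to absorb the cubic remainder into an additional $\epsilon$.

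First I would record that $d$ inherits $C^{\sigma-1}$ smoothness from $\pi$ (Lemma~\ref{imp}) and $M_0$ (which is $C^{\sigma-1}$ on $\Omega$), and that $d(\bx)=0$ iff $\bx=\pi(\bx)$ iff $\bx\in\Omega$ by positive-definiteness of $M_0$. Moreover, since $M_0$ is bounded above and below on the compact orbit, $d(\bx)$ is comparable to $\|\bx-\pi(\bx)\|^2$ on any neighborhood of $\Omega$. For the orbital derivative, write $\bp=\pi(\bx)$, $\bz=\bx-\bp$, and differentiate $\psi(t):=d(S_t\bx)=(S_t\bx-S_{\theta_\bx(t)}\bp)^T M_0(S_{\theta_\bx(t)}\bp)(S_t\bx-S_{\theta_\bx(t)}\bp)$ at $t=0$, using $\frac{d}{dt}M_0(S_{\theta_\bx(t)}\bp)=\dot{\theta}_\bx(t)M_0'(S_{\theta_\bx(t)}\bp)$ and $\frac{d}{dt}S_{\theta_\bx(t)}\bp=\dot{\theta}_\bx(t)\bff(S_{\theta_\bx(t)}\bp)$. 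The key simplification is that the orthogonality relation \eqref{proj}, namely $\bz^T M_0(\bp)\bff(\bp)=0$, eliminates the $\dot{\theta}_\bx(0)$-factor from the cross term, leaving
\[
d'(\bx)=2\bz^T M_0(\bp)\bff(\bx)+\dot{\theta}_\bx(0)\,\bz^T M_0'(\bp)\bz.
\]

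Next I would Taylor expand $\bff(\bx)=\bff(\bp)+D\bff(\bp)\bz+O(\|\bz\|^2)$ and note from \eqref{eqbruch} that $\dot{\theta}_\bx(0)=1+O(\|\bz\|)$ (the numerator and denominator both tend to $\bff(\bp)^T M_0(\bp)\bff(\bp)>0$ as $\bz\to 0$). After again using the orthogonality $\bz^T M_0(\bp)\bff(\bp)=0$ to kill the constant term, this gives
\[
d'(\bx)=\bz^T\bigl(M_0(\bp)D\bff(\bp)+D\bff(\bp)^T M_0(\bp)+M_0'(\bp)\bigr)\bz+O(\|\bz\|^3)=2L_{M_0}(\bp;\bz)+O(\|\bz\|^3),
\]
where the $O$-term is uniform in $\bp\in\Omega$ because all coefficients are bounded in $C^1$ on the compact orbit. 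Since $\bz$ satisfies the constraint $\bz^T M_0(\bp)\bff(\bp)=0$, the quadratic form estimate $L_{M_0}(\bp;\bz)\leq L_{M_0}(\bp)\cdot\bz^T M_0(\bp)\bz$ together with \eqref{LMv} yields
\[
d'(\bx)\leq 2(-\nu+\epsilon)\,d(\bx)+C\|\bz\|^3\leq 2(-\nu+\epsilon)\,d(\bx)+C'\,d(\bx)^{3/2}.
\]

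Finally, choose a compact positively invariant neighborhood of the form $U=\{d\leq c\}$ (contained in the tube from Lemma~\ref{imp}) with $c$ so small that $C'd^{1/2}\leq 2\epsilon$ on $U$; this forces $d'(\bx)\leq 2(-\nu+2\epsilon)d(\bx)<0$ on $U\setminus\Omega$, which by Gronwall integrates to $d(S_t\bx)\leq e^{2(-\nu+2\epsilon)t}d(\bx)$ and which in turn makes sublevel sets of $d$ positively invariant so that the choice of $U$ is consistent. For the bound on $\dot\theta_\bx(t)$, evaluate \eqref{eqbruch} at a general time $t\geq 0$: since $S_t\bx\in U$, we again have $\|S_t\bx-\pi(S_t\bx)\|$ small, so both numerator and denominator are close to $\bff(\pi(S_t\bx))^T M_0(\pi(S_t\bx))\bff(\pi(S_t\bx))$ and their quotient lies in $[1-\epsilon,1+\epsilon]$ after possibly shrinking $c$ once more. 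The main obstacle is the third step, i.e., proving that the remainder really is uniformly cubic in $\|\bz\|$ so that it can be absorbed into the exponent by merely shrinking the tube; this amounts to carefully tracking the $C^1$-moduli of continuity of $\bff$, $M_0$, $M_0'$, and $\dot\theta_\bx$ on the compact orbit, which is a routine but notationally heavy quantitative expansion.
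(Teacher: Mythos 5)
Your proposal is correct, and it takes a route that, while built on the same pillars as the paper (Taylor expansion around $\pi(\bx)$, the orthogonality $(\bx-\pi(\bx))^TM_0(\pi(\bx))\bff(\pi(\bx))=0$ from \eqref{proj} to kill the cross term, control of $\dot\theta_\bx-1$ via \eqref{eqbruch}, absorption of the remainder by shrinking the tube), differs in how the leading quadratic term is bounded. You extract $d'(\bx)=2L_{M_0}(\pi(\bx);\bz)+O(\|\bz\|^3)$ and then invoke the already-established pointwise bound $L_{M_0}\le-\nu+\epsilon$ from \eqref{LMv} via the homogeneity of the quadratic form on the constrained subspace. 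The paper instead rewrites $d(S_t\bx)=\|\bw(t)\|^2$ with $\bw(t)=S^{-1}P^{-1}(S_{\theta(t)}\bp)\bv(t)$, uses the identity \eqref{alto} for $(P^{-1})'$ so the $B$-terms recombine into $\bw^*(A+A^*)\bw$, and applies Corollary \ref{coro} together with $w_1(t)=0$ — essentially re-deriving the Step~I computation with perturbation terms attached. Your version is more modular (it treats Step~I as a black box rather than redoing it) and is arguably more transparent about where the exponent $-\nu+\epsilon$ comes from; the paper's version is more explicit about the constants (it introduces $c^*$, $c_4$, $v^*$ precisely so the "absorb the remainder" step is not hand-waving), which you correctly flag as the bookkeeping you have deferred. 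One minor terminological point: the final integration step is not really Gronwall's inequality but simply integrating $\tfrac{d}{dt}\log d(S_t\bx)\le 2(-\nu+2\epsilon)$; and you should make sure the small parameter used in Proposition \ref{prop} to build $M_0$ (and hence \eqref{LMv}) is chosen to be the same $\epsilon$ you later use in the absorption step, so that the totals line up to $-\nu+2\epsilon$ exactly as in the statement.
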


\begin{proof}
Note that $d$ is $C^{\sigma-1}$ as all of its terms are. As $M_0(\bx)$ is positive definite, $d(\bx)=0$ if and only if $\bx=\pi(\bx)$, i.e. $\bx\in \Omega$ by Lemma \ref{imp}.
Define
\begin{eqnarray}
c^*&:=&\frac{ \epsilon}{2 p_1\, p_2 \|S^{-1}\| \,  \|S\| }>0,
\label{defc}\\
c_4&=&2c_2\frac{2c_3m_2+m_3+c^*m_2}{c_1^2m_1},\label{c4def}\\
v^*&:=&\frac{\epsilon}{2 p_1\, p_2 \|S^{-1}\| \,  \|S\| \, c_4(c_3+\|B\|)}\,.\label{defv}
\end{eqnarray}
where the constants where defined in Step II, proof of Lemma \ref{imp}.

For $\by\in U$ we use the Taylor expansion around $ \pi(\by)\in \Omega$. Hence, there is a function $\bpsi(\by)$ satisfying
\begin{eqnarray}
\bff(\by)&=&\bff(\pi(\by))+D\bff(\pi(\by))(\by-\pi(\by))+\bpsi(\by)\label{Taylor}
\end{eqnarray}
with $\|\bpsi(\by)\|\le c^* \|\by-\pi(\by)\|$ for all $\by \in U$, noting that $\Omega$ is compact,
 where we choose $U$ still to be a positively invariant, compact neighborhood of $\Omega$, possibly smaller than before and such that also have 
\begin{eqnarray}
\|\by-\pi(\by)\|&\le&\delta'=\min\left(v^*, \frac{c_1^2m_1}{2c_2[m_3+ m_2c_3] },\frac{\epsilon}{c_4},1\right)\text{ for all $\by\in U$.}\label{vest}
\end{eqnarray}

Recall that, due to the definition of $M_0$ and \eqref{first} we have
\begin{eqnarray*}
d(\bx)&=&(\bx-\pi(\bx))^T(P^{-1}(\pi(\bx)))^*(S^{-1})^*S^{-1}P^{-1}(\pi(\bx))(\bx-\pi(\bx))\\
d(S_t\bx)&=&(S_t\bx-S_{\theta_\bx(t)}\pi(\bx))^T(P^{-1}(S_{\theta_\bx(t)}\pi(\bx)))^*(S^{-1})^*\\
&&S^{-1}P^{-1}(S_{\theta_\bx(t)}\pi(\bx))(S_t\bx-S_{\theta_\bx(t)}\pi(\bx)).
\end{eqnarray*}

Now let us calculate the orbital derivative, denoting 
$\theta(t):=\theta_\bx(t)$.
\begin{eqnarray*}
d'(S_t\bx)&=&\bigg[\frac{d}{dt}\left(P^{-1}(S_{\theta(t)}\pi(\bx))\right)(S_t\bx-S_{\theta(t)}\pi(\bx))\\
&&+P^{-1}(S_{\theta(t)}\pi(\bx))[\bff(S_t\bx)-\dot{\theta}(t)\bff(S_{\theta(t)}\pi(\bx))]\bigg]^*\\
&&(S^{-1})^*S^{-1}P^{-1}(S_{\theta(t)}\pi(\bx))(S_t\bx-S_{\theta(t)}\pi(\bx))\\
&&+
(S_t\bx-S_{\theta(t)}\pi(\bx))^T(P^{-1}(S_{\theta(t)}\pi(\bx)))^*(S^{-1})^*S^{-1}\\
&&
\bigg[\frac{d}{dt}\left(P^{-1}(S_{\theta(t)}\pi(\bx))\right)(S_t\bx-S_{\theta(t)}\pi(\bx))\\
&&+P^{-1}(S_{\theta(t)}\pi(\bx))[\bff(S_t\bx)-\dot{\theta}(t)\bff(S_{\theta(t)}\pi(\bx))]\bigg].
\end{eqnarray*}
We denote $\bp:=\pi(\bx)$ and  $\bv(t):=S_t\bx-S_{\theta(t)}\pi(\bx)=S_t\bx-\pi(S_t\bx)$ by  \eqref{first}.
Hence, using \eqref{vest} for $\by=S_t\bx\in U$ since $\bx\in U$, which is positively invariant, we have 
\begin{eqnarray}
\|\bv(t)\|&\le&\delta'=\min\left(v^*, \frac{c_1^2m_1}{2c_2[m_3+ m_2c_3] },\frac{\epsilon}{c_4},1\right)\label{vest2}
\end{eqnarray}
for all $t\ge 0$. 
 We have 
 $\frac{d}{dt}\left(P^{-1}(S_{\theta(t)}\pi(\bx))\right)=\dot{\theta}(t)(-P^{-1}(S_{\theta(t)}\bp)D\bff(S_{\theta(t)}\bp)+B P^{-1}(S_{\theta(t)}\bp))$ by \eqref{alto}.
 Thus, 
\begin{eqnarray}
d'(S_t\bx)&=&
\bigg[\dot{\theta}(t)(-P^{-1}(S_{\theta(t)}\bp)D\bff(S_{\theta(t)}\bp)+B P^{-1}(S_{\theta(t)}\bp))\bv(t)\nonumber\\
&&+P^{-1}(S_{\theta(t)}\bp)[\bff(S_t\bx)-\dot{\theta}(t)\bff(S_{\theta(t)}\bp)]\bigg]^*\nonumber\\
&&(S^{-1})^*S^{-1}P^{-1}(S_{\theta(t)}\bp)\bv(t)\nonumber\\
&&+\bv(t)^T(P^{-1}(S_{\theta(t)}\bp))^*(S^{-1})^*S^{-1}\nonumber\\
&&
\bigg[\dot{\theta}(t)(-P^{-1}(S_{\theta(t)}\bp)D\bff(S_{\theta(t)}\bp)+B\nonumber P^{-1}(S_{\theta(t)}\bp))\bv(t)\nonumber\\
&&+P^{-1}(S_{\theta(t)}\bp)[\bff(S_t\bx)-\dot{\theta}(t)\bff(S_{\theta(t)}\bp)]\bigg].\label{eq33}
\end{eqnarray}

Using the Taylor expansion \eqref{Taylor} for $\by=S_t\bx$, we obtain  with $\pi(S_t\bx)=S_{\theta(t)}\bp$,
\begin{eqnarray}
\bff(S_t\bx)&=&\bff(S_{\theta(t)}\bp)+D\bff(S_{\theta(t)}\bp) \bv(t)+\bpsi(S_t\bx)
\label{Taylor2}
\end{eqnarray}
and thus with \eqref{eqbruch}
 \begin{eqnarray*}
 	\lefteqn{
 \dot{\theta}(t)-1}\nonumber\\
&=&\bigg( \bff(S_t\bx)^TM_0(S_{\theta(t)}\bp)\bff(S_{\theta(t)}\bp)-\bff(S_{\theta(t)}\bp)^TM_0(S_{\theta(t)}\bp)\bff(S_{\theta(t)}\bp)\\
&&\hspace{0.5cm}+\bv(t)^TM_0'(S_{\theta(t)}\bp)\bff(S_{\theta(t)}\bp)+\bv(t)^TM_0(S_{\theta(t)}\bp)D\bff(S_{\theta(t)}\bp)\bff(S_{\theta(t)}\bp)\bigg)\\
&&\bigg(\bff(S_{\theta(t)}\bp)^TM_0(S_{\theta(t)}\bp)\bff(S_{\theta(t)}\bp)-\bv(t)^TM_0'(S_{\theta(t)}\bp)\bff(S_{\theta(t)}\bp)\\
&&\hspace{0.5cm}-\bv(t)^TM_0(S_{\theta(t)}\bp)D\bff(S_{\theta(t)}\bp)\bff(S_{\theta(t)}\bp)\bigg)^{-1}\\
 &=&\bigg( \bv(t)^TD\bff(S_{\theta(t)}\bp)^TM_0(S_{\theta(t)}\bp)\bff(S_{\theta(t)}\bp)+\bpsi(S_t\bx)^TM_0(S_{\theta(t)}\bp)\bff(S_{\theta(t)}\bp)\\
  &&+\hspace{0.5cm}  \bv(t)^TM_0'(S_{\theta(t)}\bp)\bff(S_{\theta(t)}\bp)+\bv(t)^TM_0(S_{\theta(t)}\bp)D\bff(S_{\theta(t)}\bp)\bff(S_{\theta(t)}\bp)\bigg)\\
&&\bigg(\bff(S_{\theta(t)}\bp)^TM_0(S_{\theta(t)}\bp)\bff(S_{\theta(t)}\bp)-\bv(t)^TM_0'(S_{\theta(t)}\bp)\bff(S_{\theta(t)}\bp)\\
&&\hspace{0.5cm}-\bv(t)^TM_0(S_{\theta(t)}\bp)D\bff(S_{\theta(t)}\bp)\bff(S_{\theta(t)}\bp)\bigg)^{-1}
 \end{eqnarray*}
 which shows, using \eqref{vest2} and \eqref{c4def},
 \begin{eqnarray}
  | \dot{\theta}(t)-1|&\le&\frac{\|\bv(t)\| c_2[2c_3 m_2 +m_3]+
  \|\bpsi(S_t\bx)\| m_2c_2  }{c_1^2m_1-\|\bv(t)\|c_2[m_3+ m_2c_3] }\nonumber\\
   &\le&2c_2\frac{ 2c_3 m_2 +m_3+
  c^* m_2}{c_1^2m_1 }\|\bv(t)\|=c_4\|\bv(t)\|\le \epsilon\,.\label{est}
  \end{eqnarray}
 In particular, we have
  $1-\epsilon\le  \dot{\theta}(t)\le 1+\epsilon$, which shows the existence of $\theta(t)$ for all $t\ge 0$, $\dot{\theta}(t)>0$ for all $t\ge 0$, that $\theta(t)$ is a bijective function from $[0,\infty)$ to $[0,\infty)$ and $\lim_{t\to\infty}\theta(t)=\infty$.

  Hence, we have from \eqref{eq33} and \eqref{Taylor2}
    \begin{eqnarray*}
d'(S_t\bx)&=&\big[(1-\dot{\theta}(t))P^{-1}(S_{\theta(t)}\bp)D\bff(S_{\theta(t)}\bp)\bv(t)
+B P^{-1}(S_{\theta(t)}\bp)\bv(t)\\
&&\hspace{0.5cm}-(1-\dot{\theta}(t))B P^{-1}(S_{\theta(t)}\bp)\bv(t)+(1-\dot{\theta}(t))
P^{-1}(S_{\theta(t)}\bp)\bff(S_{\theta(t)}\bp)
\\
&&\hspace{0.5cm}+P^{-1}(S_{\theta(t)}\bp)\bpsi(S_t\bx)\big]^*
(S^{-1})^*S^{-1}P^{-1}(S_{\theta(t)}\bp)\bv(t)\\
&&+\bv(t)^T(P^{-1}(S_{\theta(t)}\bp))^*(S^{-1})^*S^{-1}\big[(1-\dot{\theta}(t))P^{-1}(S_{\theta(t)}\bp)D\bff(S_{\theta(t)}\bp)\bv(t)\\
&&\hspace{0.5cm}+B P^{-1}(S_{\theta(t)}\bp)\bv(t)-(1-\dot{\theta}(t))B P^{-1}(S_{\theta(t)}\bp)\bv(t)\\
&&\hspace{0.5cm}
+(1-\dot{\theta}(t))
P^{-1}(S_{\theta(t)}\bp)\bff(S_{\theta(t)}\bp)+P^{-1}(S_{\theta(t)}\bp)\bpsi(S_t\bx)\big]\\
&\le&2
\|S^{-1}P^{-1}(S_{\theta(t)}\bp)\bv(t)\|\, \|S^{-1}\| \,  \|P^{-1}(S_{\theta(t)}\bp)\|\,\\
&&\hspace{0.5cm}
\left[|1-\dot{\theta}(t)|(\|D\bff(S_{\theta(t)}\bp)\| +\|B\|)\|\bv(t)\|+\|\bpsi(S_t\bx)\|\right]\\
&&+\bv(t)^*(P^{-1}(S_{\theta(t)}\bp))^*\left[(S^{-1})^*S^{-1}B +B^*(S^{-1})^*S^{-1}
\right]P^{-1}(S_{\theta(t)}\bp)\bv(t)
\end{eqnarray*}  
using $$0=\bff(S_{\theta(t)}\bp)^*M_0(S_{\theta(t)}\bp)\bv(t)=\bff(S_{\theta(t)}\bp)^*(P^{-1}(S_{\theta(t)}\bp))^*(S^{-1})^*S^{-1}P^{-1}(S_{\theta(t)}\bp)\bv(t)$$ by \eqref{theta}.

Setting $\bw(t)=S^{-1}P(S_{\theta(t)}\bp)^{-1}\bv(t)$, we obtain, using \eqref{est}
and \eqref{vest2}
\begin{eqnarray*}
d'(S_t\bx)
&\le&2\, p_2
\|\bw(t)\|\, \|S^{-1}\|  \|\bv(t)\|
\left[c_4  (c_3+\|B\|) \|\bv(t)\|+c^*\right]\\
&&+\bw(t)^* \left[S^{-1}B S+S^*B^*(S^{-1})^*\right]\bw(t)\\
&\le&2  p_1 p_2 \|S\| \, 
 \|S^{-1}\|  \left[c_4 (c_3+\|B\|)\|\bv(t)\|+c^*\right]\,\|\bw(t)\|^2\\
 &&+\bw(t)^* \left[A+A^*\right]\bw(t)\\
&\le&2\epsilon\,\|\bw(t)\|^2+\bw(t)^* \left[A+A^*\right]\bw(t)
\end{eqnarray*}
by \eqref{defv} and \eqref{defc}.
 Noting that
$$w_1(t)=\be_1^*\bw(t)=\bff(S_{\theta(t)}\bp)^*(P^{-1}(S_{\theta(t)}\bp))^*(S^{-1})^*S^{-1}P^{-1}(S_{\theta(t)}\bp)\bv(t)=0$$
we have   with Corollary \ref{coro}
$$\bw(t)^* \left[A+A^*\right]\bw(t) \le   2 (-\nu+\epsilon)
\|\bw(t)\|^2.$$

Altogether, we have
\begin{eqnarray*}
d'(S_t\bx)
&\le&
\left[  2\epsilon-2\nu+2\epsilon\right]\|\bw(t)\|^2\\
&=&2(-\nu+2\epsilon) d(S_t\bx),
\end{eqnarray*}which shows  $d(S_t\bx)\le e^{2(-\nu+2\epsilon)  t}d(\bx)$ for all $\bx\in U$ and $t\ge 0$.
\end{proof}

Let us summarize the results obtained so far in the following corollary.

\begin{corollary}\label{help_other}
Let $\Omega$ be an exponentially stable periodic orbit of $\dot{\bx}=\bff(\bx)$ with $\bff\in C^\sigma(\mathbb R^n,\mathbb R^n)$ and $\sigma\ge 2$, such that
$-\nu<0$ is the maximal real part of all non-trivial Floquet exponents.

	For   $\epsilon_0\in (0,\min(\nu,1))>0$  there is a compact, positively invariant neighborhood $U$ of $\Omega$ with $\Omega\subset U^\circ$ and $U\subset A(\Omega)$, and a map $\pi\in C^{\sigma-1}( U, \Omega)$ with $\pi(\bx)=\bx$ if and only if $\bx\in\Omega$.
	
	Furthermore, for a fixed $\bx\in U$, there is a bijective $C^{\sigma-1}$ map
	$\theta_\bx\colon [0,\infty)\to[0,\infty)$ with inverse
		$t_\bx=\theta_\bx^{-1}\in C^{\sigma-1}( [0,\infty),[0,\infty))$
	such that $\theta_\bx(0)=0$ and
	$$\pi(S_t\bx)=S_{\theta_\bx(t)}\pi(\bx)$$
	for all $t\in [0,\infty)$.
	We have $\dot{\theta}_\bx(t)\in \left[1-\epsilon_0,1+\epsilon_0\right]$ for all $t\ge 0$ and 
	 $\dot{t}_\bx(\theta)\in \left[1-\epsilon_0,1+\epsilon_0\right]$ for all $\theta\ge 0$. 
	 
	 Finally, there is a constant $C>0$ such that 
		\begin{eqnarray}|\dot{t}_\bx(\theta)-1|&\le& Ce^{(-\nu+\epsilon_0) \theta}\label{res1}\\
	\|S_{t_\bx(\theta)}\bx-S_\theta \pi(\bx)\|&\le& Ce^{(-\nu+\epsilon_0)  \theta}\|\bx-\pi(\bx)\|\label{res2}
	\end{eqnarray}
	for all $\theta\ge 0$ and all $\bx\in U$.
	\end{corollary}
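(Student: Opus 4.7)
The plan is to treat the bulk of the statement as a compilation of Lemmas~\ref{imp} and \ref{def_d} together with the synchronization construction, and to devote the main work to the exponential estimates \eqref{res1}--\eqref{res2}. We would begin by invoking Lemma~\ref{def_d} with a parameter $\epsilon$ chosen in terms of the given $\epsilon_0$ and $\nu$; concretely, any $\epsilon\le\epsilon_0/(2+\nu-\epsilon_0)$ (also requiring $\epsilon<\min(1,\nu/2)$) suffices for the bookkeeping below. This immediately produces the compact positively invariant neighborhood $U\subset A(\Omega)$, the projection $\pi\in C^{\sigma-1}(U,\Omega)$, and the synchronization $\theta_\bx\in C^{\sigma-1}([0,\infty),\mathbb R)$ with $\pi(S_t\bx)=S_{\theta_\bx(t)}\pi(\bx)$ and $\dot\theta_\bx(t)\in[1-\epsilon,1+\epsilon]\subseteq[1-\epsilon_0,1+\epsilon_0]$. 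Since $\dot\theta_\bx\ge 1-\epsilon>0$ and $\theta_\bx(t)\ge(1-\epsilon)t\to\infty$, the inverse $t_\bx:=\theta_\bx^{-1}$ is a well-defined $C^{\sigma-1}$ bijection of $[0,\infty)$ with $\dot t_\bx(\theta)=1/\dot\theta_\bx(t_\bx(\theta))\in[1/(1+\epsilon),1/(1-\epsilon)]\subseteq[1-\epsilon_0,1+\epsilon_0]$.

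For \eqref{res2}, we would exploit that $M_0$ is continuous and uniformly positive definite on the compact set $\Omega$, so there exist constants $0<m_1\le m_2$ with $m_1\|\by-\pi(\by)\|^2\le d(\by)\le m_2\|\by-\pi(\by)\|^2$ for every $\by\in U$. Combining this equivalence with the decay $d(S_t\bx)\le e^{2(-\nu+2\epsilon)t}d(\bx)$ from Lemma~\ref{def_d} and the identity $\pi(S_t\bx)=S_{\theta_\bx(t)}\pi(\bx)$, and then substituting $t=t_\bx(\theta)$, yields
\[
\|S_{t_\bx(\theta)}\bx-S_\theta\pi(\bx)\|\le\sqrt{m_2/m_1}\,e^{(-\nu+2\epsilon) t_\bx(\theta)}\|\bx-\pi(\bx)\|.
\]
Because $\dot\theta_\bx(t)\le 1+\epsilon$ forces $t_\bx(\theta)\ge\theta/(1+\epsilon)$ and the exponent $-\nu+2\epsilon$ is negative, the choice $\epsilon\le\epsilon_0/(2+\nu-\epsilon_0)$ is precisely what guarantees $(-\nu+2\epsilon)/(1+\epsilon)\le -\nu+\epsilon_0$, so \eqref{res2} follows with $C=\sqrt{m_2/m_1}$.

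Finally, for \eqref{res1} we would reuse the intermediate bound \eqref{est} from the proof of Lemma~\ref{def_d}, namely $|\dot\theta_\bx(t)-1|\le c_4\|S_t\bx-\pi(S_t\bx)\|$. Setting $t=t_\bx(\theta)$ and applying the estimate just derived gives $|\dot\theta_\bx(t_\bx(\theta))-1|\le c_4\sqrt{m_2/m_1}\,e^{(-\nu+\epsilon_0)\theta}\|\bx-\pi(\bx)\|$. Writing $\dot t_\bx(\theta)-1=(1-\dot\theta_\bx(t_\bx(\theta)))/\dot\theta_\bx(t_\bx(\theta))$, using the lower bound $\dot\theta_\bx(t_\bx(\theta))\ge 1-\epsilon_0$, and noting that $\|\bx-\pi(\bx)\|\le\delta'$ is uniformly bounded on $U$, one absorbs all the prefactors into a single constant $C$ to obtain \eqref{res1}. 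The only real delicacy in the whole argument is this bookkeeping: the $\epsilon$ fed to Lemma~\ref{def_d} must be chosen small enough so that, after composition with the time-reparametrization $t_\bx$, the effective rate $(-\nu+2\epsilon)/(1+\epsilon)$ does not exceed the advertised $-\nu+\epsilon_0$.
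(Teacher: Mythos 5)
Your proposal is correct and follows essentially the same route as the paper: set a small $\epsilon$ in terms of $\epsilon_0$ and $\nu$, invoke Lemmas~\ref{imp} and~\ref{def_d}, invert $\theta_\bx$, bound $t_\bx(\theta)$ from below by a multiple of $\theta$, and use the $d$-decay and estimate \eqref{est} to get the exponential rates. The only differences are cosmetic bookkeeping — the paper fixes $\epsilon=\epsilon_0/(2(1+\nu))$ and uses $t_\bx(\theta)\ge(1-2\epsilon)\theta$, proving \eqref{res1} first and asserting \eqref{res2} "similarly," whereas you use $t_\bx(\theta)\ge\theta/(1+\epsilon)$ with the slightly weaker constraint $\epsilon\le\epsilon_0/(2+\nu-\epsilon_0)$ and derive \eqref{res1} from \eqref{res2}; both are valid.
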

\begin{proof}
Setting $\epsilon:=\frac{\epsilon_0}{2(1+\nu)}\le\min\left( \frac{\epsilon_0}{2},\frac{1}{2}\right)\le  \min\left( \frac{\nu}{2},1\right)$, all results  follow directly from Lemma \ref{def_d}   by using the inverse $t(\theta)$ of $\theta(t)$. Indeed,  we have\begin{eqnarray*}
|\dot{t}_\bx(\theta)-1|&=&\left|\frac{1-\dot{\theta}_\bx(t(\theta))}{\dot{\theta}_\bx(t(\theta))}\right|\\
&\le&\frac{\epsilon}{1-\epsilon}\\
&\le&2\epsilon\le\epsilon_0\,.
\end{eqnarray*}
Furthermore, we have by  \eqref{est} and noting that
$ m_1 \|S_{t_\bx(\theta)}\bx-S_\theta \pi(\bx)\|^2\le d(S_{t_\bx(\theta)}\bx) \le m_2 \|S_{t_\bx(\theta)}\bx-S_\theta \pi(\bx)\|^2$  
\begin{eqnarray*}
|\dot{t}_\bx(\theta)-1|&\le&
\left|\frac{1-\dot{\theta}_\bx(t(\theta))}{1/2}\right|\\
&\le&2c_4\|\bv(t(\theta))\|\\
&\le&\frac{2c_4}{\sqrt{m_1}}\sqrt{d(S_{t(\theta)}\bx)}\\
&\le&Ce^{(-\nu+2\epsilon)t(\theta)}\sqrt{d(\bx)}\\
&\le&Ce^{(-\nu+2\epsilon)(1-2\epsilon)\theta}\\
&\le&Ce^{(-\nu+2\epsilon(1+\nu)-4\epsilon^2)\theta}\\
&\le&Ce^{(-\nu+\epsilon_0)\theta},
\end{eqnarray*}
using $t(\theta)=\int_0^\theta \dot{t}(\tau)\,d\tau\ge \theta(1-2\epsilon)$ and that $d(\bx)$ is bounded in $U$.
Similarly, we can prove \eqref{res2} from Lemma \ref{def_d}.
\end{proof}

\vspace{0.3cm}
\noindent \underline{\bf V. Definition of $M_1$ and $M$ in $A(\Omega)$}

\noindent
For all $\bx\in U$ we have defined the distance
$$d(\bx)=(\bx-\pi(\bx))^TM_0(\pi(\bx))(\bx-\pi(\bx))$$
in Lemma \ref{def_d}
which is $C^{\sigma-1}$. Let $\iota>0$ be so small that
the set $\Omega_{2\iota}:=\{\bx\in U\colon d(\bx)\le 2\iota\}$ satisfies $\Omega_{2\iota}\subset U^\circ$.  Define
the $C^\infty$ functions $h_1\colon \Omega_\iota \to [0,1]$, $h_2\colon \Omega_{2\iota} \to [0,1]$ such that $h_1(\bx)=1$ for all $d(\bx)\le \frac{\iota}{3}$ and $h_1(\bx)=0$ for all $d(\bx)\ge \frac{2}{3} \iota$, and $h_2(\bx)=1$ for all $d(\bx)\le \frac{4}{3}\iota$ and $h_2(\bx)=0$ for all $d(\bx)\ge \frac{5}{3} \iota$.  Set
$$M_1(\bx):=\left\{\begin{array}{ll}
I&\mbox{ if }\bx\not \in  \Omega_{2\iota},\\
(1-h_2(\bx))I+h_2(\bx)M_0(\pi(\bx))&\mbox{ if }\bx \in  \Omega_{2\iota}.\end{array}\right.$$
It is clear that $M_1(\bx)$ is positive definite for all $\bx\in \mathbb R^n$, $M_1$ is $C^{\sigma-1}$  and $M_1(\pi(\bx))=M_0(\pi(\bx))$ for all $\bx\in \Omega_{\frac{4}{3}\iota}$.
%


We will define the Riemannian metric $M$ through $M_1$ and a scalar-valued function $V\colon A(\Omega)\to \mathbb R$, which will be defined later. 
Let us denote $\mu:=\nu-\epsilon>0$. The function $V$ will be continuous and continuously orbitally differentiable and satisfy
\begin{eqnarray}
V'(\bx)&=&-L_{M_1}(\bx)+r(\bx), \text{ where }\label{eqV}\\
r(\bx)&=&\left\{\begin{array}{ll}
-\mu&\mbox{ if }\bx\not \in  \Omega_\iota,\\
-\mu(1-h_1(\bx))+h_1(\bx)L_{M_1}(\pi(\bx))&\mbox{ if }\bx \in  \Omega_\iota.\end{array}\right.
\end{eqnarray}
Note that $$r(\bx)\le -\mu$$
for all $\bx\in \mathbb R^n$. Indeed, for $\bx\in\Omega_\iota$ we have   $L_{M_1}(\pi(\bx))=L_{M_0}(\pi(\bx))\le -\mu$ as $\pi(\bx)\in \Omega$, see \eqref{LMv}, and thus
\begin{eqnarray*}
r(\bx)&=& -\mu+\underbrace{h_1(\bx)}_{\ge 0}\underbrace{(\mu+L_{M_1}(\pi(\bx))}_{\le 0}
\ \le\  -\mu\,.
\end{eqnarray*} 

Then we define
$$M(\bx)=e^{2V(\bx)}M_1(\bx).$$
We obtain by Lemma \ref{lem}
$$L_M(\bx)=L_{M_1}(\bx)+V'(\bx)
=L_{M_1}(\bx)-L_{M_1}(\bx)+r(\bx)\le-\mu.$$

This shows the theorem. In the last steps we will define the function $V$ and prove the properties stated above.

\vspace{0.3cm}
\noindent \underline{\bf VI. Definition of $V_{loc}$}

\noindent
We define $V_{loc}({\bf x})$ for $ {\bf x}\in \Omega_\iota$. Note that $\Omega_\iota$ is positively invariant by Lemma \ref{def_d}, so
$S_t \bx\in \Omega_\iota$ for all $t\ge 0$. We define
\begin{eqnarray}
V_{loc}({\bf x})&=&\int_0^\infty [L_{M_1}(S_t {\bf x})-L_{M_1}(S_{\theta_\bx(t)}
\pi({\bf x}))]\,dt.\label{Vdefi}
\end{eqnarray}
We have $V_{loc}(\bx)=0$ for all $\bx\in\Omega$.
We will show that the $V_{loc}$ is well-defined, continuous and orbitally continuously differentiable for all $\bx\in \Omega_\iota$ and that
 (\ref{eqV}) holds for all $\bx\in \overline{\Omega_{\iota/3}}$.

For $\bx\in U$, define 
\begin{eqnarray*}
g_T(\tau,\bx)&=&\int_\tau^{T+\tau} [L_{M_1}(S_t {\bf x})-L_{M_1}(S_{\theta_\bx(t)}
\pi({\bf x}))]\,dt. 
\end{eqnarray*}
  By Lemma \ref{def_d} there is a constant $C>0$ such that, defining $\bp:=\pi(\bx)\in \Omega$,
 \begin{eqnarray}
 \label{expsta}
 \|S_t\bx-S_{\theta_\bx(t)}\bp\|&\le& Ce^{-\mu_0 t}
 \end{eqnarray}
 for all $t\ge 0$ and all $\bx\in U$ with $\mu_0:=\nu-2\epsilon>0$; note that $S_{\theta_\bx(t)}\bp=\pi(S_t\bx)$ by \eqref{first}. 

 Now, we use  Lemma \ref{Lipschitz} and $\sigma\ge 3$, showing that $L_{M_1}$ is Lipschitz-continuous on the compact set $U$; note that $\sigma-1\ge 2$. 
 Hence, 
  \begin{eqnarray*}
 \left|L_{M_1}(S_t {\bf x})-L_{M_1}(S_{\theta_\bx(t)}
\pi({\bf x}))\right|
&\le&L  C_1\left\|S_t {\bf x}-S_{\theta_{\bx}(t)}\bp\right\|\\
&\le&L C_2 e^{-\mu_0 t}
\end{eqnarray*}
by \eqref{expsta},
which is integrable over $[0,\infty)$. Hence, by Lebesgue's dominated convergence theorem, the function $g_T(\tau,\bx)$ converges point-wise for $T\to \infty$ for all $\tau\ge 0$  and $\bx\in U$.

Choose $\theta_0>0$ so small that $S_{-\theta_0}\Omega_{\iota}\subset U$.
We have that
\begin{eqnarray*}
\lefteqn{\frac{\partial }{\partial \tau}g_T(\tau,\bx)}\\&=&
[L_{M_1}(S_{T+\tau} {\bf x})-L_{M_1}(S_{\theta_\bx(T+\tau)}
\pi({\bf x}))]
- \left(L_{M_1}(S_{\tau}\bx)-L_{M_1}(S_{\theta_{\bx}(\tau)}\bp)\right)\\
&=&
[L_{M_1}(S_T(S_\tau {\bf x}))-L_{M_1}(S_{\theta_{S_\tau\bx}(T)}
\pi(S_\tau{\bf x}))]
- \left(L_{M_1}(S_{\tau}\bx)-L_{M_1}(S_{\theta_{\bx}(\tau)}\bp)\right)
\end{eqnarray*}
by Lemma \ref{change}. For $\bx\in\Omega_\iota$, the right-hand side  
converges uniformly in $\tau \in  (-\theta_0,\theta_0)$ 
as $T\to \infty$ to
$- \left(L_{M_1}(S_{\tau}\bx)-L_{M_1}(S_{\theta_{\bx}(\tau)}\bp)\right)$ by the same estimate as above. Hence, we can exchange $\frac{d}{d\tau}$ and $\lim_{T\to\infty}$.
Altogether, we thus have for all $\bx\in\Omega_\iota$, using Lemma \ref{change}
\begin{eqnarray*}
V_{loc}'(\bx)&=&\frac{d}{d\tau}V_{loc}(S_\tau \bx)\bigg|_{\tau=0}\\
&=&\frac{d}{d\tau}\int_0^\infty [L_{M_1}(S_{t+\tau} {\bf x})-L_{M_1}(S_{\theta_{S_\tau\bx}
(t)}
\pi(S_\tau{\bf x}))]\,dt\bigg|_{\tau=0}\\
&=&\frac{d}{d\tau}\lim_{T\to\infty}\int_0^T  [L_{M_1}(S_{t+\tau} {\bf x})-L_{M_1}(S_{\theta_\bx(t+\tau)}
\pi(\bx))]\,dt\bigg|_{\tau=0}\\
&=&\frac{d}{d\tau}\lim_{T\to\infty}\int_\tau^{T+\tau} [L_{M_1}(S_{t} {\bf x})-L_{M_1}(S_{\theta_\bx (t)}
\pi(\bx))]\,dt\bigg|_{\tau=0}\\
&=&\frac{d}{d\tau}\lim_{T\to \infty}g_T(\tau,\bx)\bigg|_{\tau=0}\\
&=&\lim_{T\to \infty}\frac{d}{d\tau}g_T(\tau,\bx)\bigg|_{\tau=0}\\
&=&- L_{M_1}(\bx)+L_{M_1}(\bp)
\end{eqnarray*}
and in particular, that $V_{loc}$ is continuously orbitally differentiable. 
Note that $V_{loc}'(\bx)=-L_{M_1}(\bx)+r(\bx)$ 
for all $\bx\in\overline{\Omega_{\iota/3}}$.

\noindent \underline{\bf VII. Definition of $V_{glob}$ in $A(\Omega)$}

\noindent
For the global part note that 
$V_{loc}$ is defined and smooth in $\Omega_{\iota}$ and we have
$V_{loc}'(\bx)=-L_{M_1}(\bx)+r(\bx)$ for all $\bx\in \overline{\Omega_{\iota/3}}$.
The global function $V_{glob}\colon A(\Omega)\setminus \Omega\to \mathbb R$ is defined as the solution of the non-characteristic Cauchy problem
\begin{eqnarray}
\begin{array}{lcl}
  \nabla V_{glob}({\bf x})\cdot \bff({\bf x})&=&-L_{M_1}(\bx)+r(\bx)
\mbox{  for }{\bf x}\in A(\Omega)\setminus
\Omega\\
V_{glob}({\bf x})&=&V({\bf x}) \mbox{ for }
{\bf x}\in\Gamma,\end{array}\label{cauchy}
\end{eqnarray} where
$\Gamma=\{{\bf x}\in U\mid d({\bf x})=\iota/3\}$.

 In particular, we can construct the solution by first defining the function $\tau\in C^\sigma(A(\Omega)\setminus \Omega,\mathbb R)$ implicitly by
$$d(S_{\tau}\bx)=\iota/3.$$
Since $\bx\in A(\Omega)\setminus \Omega$,  there exists a $\tau$ satisfying the equation, and since $d'(\bx)<0$ for all $\bx\in \Gamma$, $\tau(\bx)$ is unique. The function $\tau$ is $C^{\sigma-1}$, since $d$ and $S_\tau$ are. We have
$\tau'(\bx)=
-1$. Then the function 
$$V_{glob}(\bx)=\int_0^{\tau(\bx)} q(S_t\bx)\,dt+V_{loc}(S_{\tau(\bx)}(\bx))$$
with $q(\bx):=L_{M_1}(\bx)-r(\bx)$ is continuous and orbitally continuously differentiable and satisfies \eqref{cauchy}, noting that $S_{\tau(\bx)}(\bx)=S_{\tau(S_\theta\bx)}(S_\theta\bx)$ for all $\theta\ge 0$. Indeed, for $\bx\in \Gamma$ we have $ V_{glob}(\bx)=V_{loc}(\bx)$ and we have
\begin{eqnarray*}
V_{glob}'(\bx)&=&\frac{d}{d\theta}\left(\int_0^{\tau(S_\theta \bx)} q(S_{t+\theta}\bx)\,dt+V(S_{\tau(S_\theta \bx)}(S_\theta \bx))\right)\bigg|_{\theta=0}\\
&=&\frac{d}{d\theta}\left(\int_\theta^{\tau(S_\theta \bx)+\theta} q(S_{t}\bx)\,dt+V(S_{\tau( \bx)}( \bx))\right)\bigg|_{\theta=0}\\
&=&\left( q(S_{\tau(S_\theta \bx)+\theta}\bx)(\tau'( \bx)+1)
-q(S_\theta \bx)\right)\bigg|_{\theta=0}\\
&=&-q(\bx)
	\end{eqnarray*}
	since $\tau'(\bx)=-1$.
	
	Note that we have $V_{glob}(\bx)=V_{loc}(\bx)$ for $\bx\in \overline{\Omega_{\iota/3}}\setminus \Omega$, and hence $V_{glob}$ can be extended to a continuous and orbitally continuously differentiable function $V$ on  $A(\Omega)$ satisfying \eqref{eqV} by setting $V_{glob}(\bx):=V_{loc}(\bx)=0$ for all $\bx \in \Omega$.
This proves the theorem.
\end{proof}

\section*{Conclusions}

In this paper we have proven a converse theorem, showing the existence of a contraction metric for an exponentially stable periodic orbit. The metric is defined in its basin of attraction and the bound  on the function $L_M$ is arbitrarily close to the true  exponential rate of attraction.

\begin{appendix}
	
	\section{Local Lipschitz-continuity of $L_M$}
	
	In the appendix we prove that the function $L_M$ is locally Lipschitz continuous.
	 
\begin{lemma}\label{Lipschitz} Let $\bff\in C^2(\mathbb R^n,\mathbb R^n)$ and $M\in C^2(\mathbb R^n,\mathbb S^n)$ such that $M(\bx)$ is positive definite for all $\bx\in \mathbb R^n$.

 Then $L_M$ is locally Lipschitz continuous on $D=\{\bx\in \mathbb R^n\mid \bff(\bx)\not=\bnull\}$.
\end{lemma}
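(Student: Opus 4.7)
The plan is to reduce the constrained max defining $L_M$ to a max over a \emph{fixed} compact set, and then invoke the standard fact that the max of a $C^1$ function of $\bx$ parametrized by a compact set is locally Lipschitz.

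Fix $\bx_0 \in D$. First I would rewrite $L_M(\bx;\bv) = \tfrac{1}{2} \bv^T Q(\bx) \bv$ with
$$Q(\bx) := M(\bx) D\bff(\bx) + D\bff(\bx)^T M(\bx) + M'(\bx).$$
The regularity hypotheses $\bff, M \in C^2$ give that $M'_{ij} = \nabla M_{ij}\cdot\bff$ is $C^1$, so $Q$ is $C^1$.

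Next I would construct in a neighborhood $U \subset D$ of $\bx_0$ a $C^1$ parametrization of the constraint set. Let $C(\bx)$ be the symmetric positive-definite square root of $M(\bx)$, which is $C^1$ near $\bx_0$. Set $\bh(\bx) := C(\bx)\bff(\bx) / \|C(\bx)\bff(\bx)\|$, a $C^1$ Euclidean unit vector on $U$ since $\bff(\bx_0) \neq \bnull$. Removing a fixed ambient basis vector that is not aligned with $\bh(\bx_0)$ and applying Gram--Schmidt yields a $C^1$ matrix-valued map $E : U \to \mathbb R^{n \times (n-1)}$ whose columns are a Euclidean orthonormal basis of $\bh(\bx)^{\perp}$. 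Define $T(\bx) := C(\bx)^{-1} E(\bx)$. Using $M = C^T C = C^2$ one checks
$$T(\bx)^T M(\bx) T(\bx) = E(\bx)^T E(\bx) = I_{n-1}, \qquad T(\bx)^T M(\bx) \bff(\bx) = \|C(\bx)\bff(\bx)\|\, E(\bx)^T \bh(\bx) = \bnull,$$
so $\bu \mapsto T(\bx)\bu$ is a $C^1$ bijection from $\{\bu \in \mathbb R^{n-1} : \|\bu\| = 1\}$ onto the constraint set in \eqref{L_M}. Consequently
$$L_M(\bx) = \max_{\|\bu\|=1} \tfrac{1}{2}\, \bu^T A(\bx) \bu, \qquad A(\bx) := T(\bx)^T Q(\bx) T(\bx),$$
with $A$ of class $C^1$ on $U$ and the maximum now over a fixed compact set.

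Local Lipschitz continuity is then immediate: for $\bx_1,\bx_2$ in a compact subneighborhood of $\bx_0$ and $\bu^*$ attaining the max at $\bx_1$,
$$L_M(\bx_1) - L_M(\bx_2) \le \tfrac{1}{2}\,{\bu^*}^T\bigl(A(\bx_1) - A(\bx_2)\bigr)\bu^* \le \tfrac{1}{2}\|A(\bx_1) - A(\bx_2)\|,$$
and symmetrically, yielding $|L_M(\bx_1) - L_M(\bx_2)| \le L\|\bx_1 - \bx_2\|$ from the $C^1$ bound on $A$.

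The only real technical obstacle is the construction of the smooth orthonormal frame $E(\bx)$: globally there are well-known topological obstructions to a continuous frame on $\bh^{\perp}$, but these are irrelevant here because only a local construction near $\bx_0$ is needed, which Gram--Schmidt supplies on any sufficiently small neighborhood of $\bx_0$.
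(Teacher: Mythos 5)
Your proof is correct and follows the same basic strategy as the paper's: reduce the constrained maximum to a maximum of a $C^1$ family of quadratic forms over the \emph{fixed} Euclidean unit sphere in $\R^{n-1}$, then conclude from the standard fact that the pointwise maximum of a locally Lipschitz family over a fixed compact parameter set is locally Lipschitz. The two proofs differ only in how the $C^1$ change of coordinates onto the constraint set is built. The paper fixes an $M(\bx_0)$-orthogonal basis $\bv_1=\bff(\bx_0),\bv_2,\ldots,\bv_n$, projects $\bv_2,\ldots,\bv_n$ onto the $M(\by)$-orthogonal complement of $\bff(\by)$ to get a $\by$-dependent basis, packages these columns into a matrix, restricts the quadratic form to the lower-right $(n-1)\times(n-1)$ block, and finally applies a Cholesky factorization to normalize the induced Gram matrix to the identity; the conclusion is then phrased as global Lipschitz continuity of $\lambda_{\max}$ on symmetric matrices. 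You instead use the symmetric square root $C(\by)=M(\by)^{1/2}$ (smooth in $M$, hence $C^1$) and a local Gram--Schmidt frame $E(\by)$ for $(C(\by)\bff(\by))^\perp$, so that $T(\by)=C(\by)^{-1}E(\by)$ satisfies $T^TMT=I_{n-1}$ and $T^TM\bff=\bnull$ automatically, with no separate normalization step; the final estimate uses the maximizer at $\bx_1$ as a feasible test vector at $\bx_2$, which is the standard elementary version of the $\lambda_{\max}$-Lipschitz fact the paper invokes. Your route is marginally shorter because the square-root trick makes the Gram matrix the identity by construction, avoiding the Cholesky step and the linear-independence verification the paper carries out for its projected basis; the paper's route has the slight advantage of not needing the matrix square root, working entirely with rational operations and the explicitly constructed Cholesky factor. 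Both are valid.
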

\begin{proof}
For $\by\in D$ we define a projection $P_\by\colon \mathbb R^n\to\mathbb R^n$  onto the $(n-1)$-dimensional space of vectors $\bw\in \mathbb R^n$ with $\bff(\by)^TM(\by)\bw=0$  by 
$$P_\by \bv =\bv -\frac{\bff(\by)^TM(\by)\bv}{\bff(\by)^TM(\by)\bff(\by)}\bff(\by)$$
for all $\by\in D$ and all $\bv\in \mathbb R^n$.
Note that indeed
\begin{eqnarray*}
\bff(\by)^TM(\by)P_\by \bv 
&=&\bff(\by)^TM(\by)\bv -\frac{\bff(\by)^TM(\by)\bv}{\bff(\by)^TM(\by)\bff(\by)}
\bff(\by)^TM(\by)\bff(\by)\\
&=&\bnull.
\end{eqnarray*}

Fix $\bx\in D$ and choose a basis $\bv_1=\bff(\bx),\bv_2,\ldots,\bv_n$ of $\mathbb R^n$ such that $\bv_i^TM(\bx)\bv_j=0$ for $i\not=j$.
Choose $\epsilon>0$ such that 
\begin{eqnarray}
\bff(\by)^TM(\bx)\bff(\bx)&\not=&0\label{not0}
\end{eqnarray} holds for all $\by\in B_\epsilon(\bx)$; note that for $\by=\bx$ we have $\bff(\bx)^TM(\bx)\bff(\bx)\not=0$. 

For $\by\in B_\epsilon(\bx)$ we define $\bw_1=\bff(\by)$ and $\bw_i=P_\by\bv_i$ for $i=2,\ldots,n$. We show that $(\bw_1,\ldots,\bw_n)$ is a basis of $\mathbb R^n$.

Let us first show that $\bw_i\not=\bnull$ for $i=2,\ldots,n$. Assuming the opposite, we have
\begin{eqnarray}
\bv_i&=&\frac{\bff(\by)M(\by)\bv_i}{\bff(\by)^TM(\by)\bff(\by)}\bff(\by)\label{first_eq}\\
0&=&\frac{\bff(\by)M(\by)\bv_i}{\bff(\by)^TM(\by)\bff(\by)}\nonumber
\end{eqnarray}
 multiplying by $\bff(\bx)^TM(\bx)$ from the left
as $\bff(\bx)^TM(\bx)\bff(\by)\not=0$ by \eqref{not0}. This, however, implies by \eqref{first_eq} that $\bv_i=\bnull$ which is a contradiction.
$\bw_1\not=\bnull$ follows directly from \eqref{not0}.

We express $\bff(\by)=\sum_{j=1}^n\beta_j\bv_j$ and note that multiplying this equation by $\bff(\bx)^TM(\bx)$ from the left gives $$0\not=\bff(\bx)^TM(\bx)\bff(\by)=\beta_1 \bff(\bx)^TM(\bx)\bff(\bx)$$ 
by \eqref{not0}, i.e. in particular $\beta_1\not=0$.

To show that the $\bw_i$ form a basis, we  assume $\sum_{i=1}^n\alpha_i\bw_i=\bnull$. Multiplying this equation by $\bff(\by)^TM(\by)$ from the left gives
$\alpha_1 \bff(\by)^TM(\by)\bff(\by)=0$ by the projection property, hence $\alpha_1=0$.

Hence,
\begin{eqnarray*}
\bnull&=&\sum_{i=2}^n\alpha_i\left[\bv_i-\frac{\bff(\by)^TM(\by)\bv_i}{\bff(\by)^TM(\by)\bff(\by)}\bff(\by)\right]\\
&=&\sum_{i=2}^n\alpha_i\bv_i-
\sum_{i=2}^n\sum_{j=1}^n \frac{\bff(\by)^TM(\by)\bv_i}{\bff(\by)^TM(\by)\bff(\by)}\beta_j\bv_j.
\end{eqnarray*}
Using that $\bv_j$ is a basis, we can conclude that the coefficient in front of $\bv_1$ is zero, namely
$$
\sum_{i=2}^n \frac{\bff(\by)^TM(\by)\bv_i}{\bff(\by)^TM(\by)\bff(\by)}\beta_1=0.$$
Since $\beta_1\not=0$, we have $
\sum_{i=2}^n \frac{\bff(\by)^TM(\by)\bv_i}{\bff(\by)^TM(\by)\bff(\by)}=0$.
Plugging this back in, we obtain $\sum_{i=2}^n\alpha_i\bv_i=\bnull$, which shows $\alpha_2=\ldots=\alpha_n=0$ as the $\bv_i$ are linearly independent.

Now define the matrix-valued function $Q\colon B_\epsilon(\bx)\to \mathbb R^{n\times n}$ by the columns
$$Q(\by)=(\bw_1(\by),\ldots,\bw_n(\by)).$$
Note that 
$Q\in C^2(B_\epsilon(\bx),\mathbb R^{n\times n})$ due to the smoothness of $\bff$ and $M$, and $Q$ is invertible for every $\by$. We have $\bw^TM(\by)\bff(\by)=0$ if and only if $\bw\in \spann (\bw_2(\by),\ldots,\bw_n(\by)$, which in turn is equivalent to 
$\bu\in \spann(\be_2,\ldots,\be_n)=:E_{n-1}$, where $\bu=Q(\by)^{-1}\bw$ and $\be_1,\ldots,\be_n$ denotes the standard basis in $\mathbb R^n$.

Now we write
\begin{eqnarray*}
\lefteqn{L_M(\by)}\\
&=&\max_{\bw^TM(\by)\bw=1,\bw^TM(\by)\bff(\by)=0}
\frac{1}{2}\bw^T\left[M(\by)D\bff(\by)+D\bff(\by)^TM(\by)+M'(\by)\right]\bw\\
&=&
\max_{\bu^TQ(\by)^TM(\by)Q(\by)\bu=1,\bu\in E_{n-1}}
\frac{1}{2}\bu^TQ(\by)^T\\
&&\hspace{1cm}\left[M(\by)D\bff(\by)+D\bff(\by)^TM(\by)+M'(\by)\right]Q(\by)\bu.
\end{eqnarray*}
Denoting by $[A]_{n-1}\in \mathbb S^{n-1}$ the lower-right square $(n-1)$ matrix of $A\in \mathbb S^{n}$ and
 with $\bu=\left(\begin{array}{l}0\\\btu \end{array}\right)$, where $\btu\in\mathbb R^{n-1}$ we get
\begin{eqnarray*}
L_M(\by)
&=&
\max_{\btu^T[Q(\by)^TM(\by)Q(\by)]_{n-1}\btu=1,\btu\in \mathbb R^{n-1}}
\frac{1}{2}\btu^T\bigg[Q(\by)^T\big[M(\by)D\bff(\by)\\
&&\hspace{1cm}
+D\bff(\by)^TM(\by)+M'(\by)\big]Q(\by)\bigg]_{n-1}\btu.
\end{eqnarray*}
Now denote by $\Chol(A)$ the unique Cholesky decomposition of the symmetric, positive definite matrix $A\in \mathbb S^{n-1}$, such that $\Chol(A)$ is an invertible, upper triangular matrix with $\Chol(A)^T\Chol(A)=A$. Denoting $C(\by):=\Chol([Q(\by)^TM(\by)Q(\by)]_{n-1})\in \mathbb R^{(n-1)\times (n-1)}$ and $\btv=C(\by)\btu\in\mathbb R^{n-1}$ we have
\begin{eqnarray*}
L_M(\by)
&=&
\max_{\|\btv\|=1,\btv\in \mathbb R^{n-1}}
\frac{1}{2}\btv^T(C^{-1}(\by))^T\\
&&\hspace{0.8cm}\left[Q(\by)^T\left[M(\by)D\bff(\by)+D\bff(\by)^TM(\by)+M'(\by)\right]Q(\by)\right]_{n-1}C^{-1}(\by)\btv\\
&=&
\max_{\|\btv\|=1,\btv\in \mathbb R^{n-1}}
\btv^TH(\by)\btv\\
&=&\lambda_{max}(H(\by))
\end{eqnarray*}
where $H(\by)\in \mathbb S^{n-1}$ is defined by 
\begin{eqnarray*}H(\by)&=&\frac{1}{2}(C^{-1}(\by))^T\left[Q(\by)^T\left[M(\by)D\bff(\by)+D\bff(\by)^TM(\by)+M'(\by)\right]Q(\by)\right]_{n-1}\\
&&C^{-1}(\by).
\end{eqnarray*}

The function $\by\to H(\by)$ is continuously differentiable as the Cholesky decomposition, the inverse, the operation $[\cdot]_{n-1}$, $Q$, $M$, $D\bff$ and $M'$ are continuously differentiable by the assumptions. Hence, the function $H(\by)$ is locally Lipschitz-continuous. The function $\lambda_{max}$ is globally Lipschitz-continuous, hence, $L_M$ is locally Lipschitz-continuous.
%
%
%
%
\end{proof}
\end{appendix}

{\small
	\bibliographystyle{plain}
	\bibliography{mybibfile}
}
%
%
%
%
%
%
%

\end{document}